\newcolumntype{P}[1]{>{\centering\arraybackslash}p{#1}}
\newcommand{\mult}{\mathrm{mult}}
\newcommand{\SL}{\mathrm{SL}}
\newcommand{\oo}{\mathcal{O}}
\newcommand{\PP}[0]{\mathbb{P}}
\renewcommand{\geq}{\geqslant}
\newcommand{\ol}[0]{\overline{\lambda}}
\newtheorem{proposition}{Proposition}[section]
\newtheorem{theorem}[proposition]{Theorem}
\newtheorem{lemma}[proposition]{Lemma}
\theoremstyle{definition}
\newtheorem{definition}[proposition]{Definition}
\newtheorem{remark}[proposition]{Remark}
\title{Moduli of cubic surfaces and their anticanonical divisors}
\author[Gallardo]{Patricio Gallardo}
\email{pgallardocandela@wustl.edu}
\address{Department of Mathematics,\\
	Washington University in St. Louis}
\author[Martinez-Garcia]{Jesus Martinez-Garcia}
\email{J.Martinez.Garcia@bath.ac.uk}
\address{	Department of Mathematical Sciences\\
	University of Bath}
\begin{document}

\maketitle
\begin{abstract} 
	We consider the moduli space of log smooth pairs formed by a cubic surface and an anticanonical divisor. We describe all compactifications of this moduli space which are constructed using Geometric Invariant Theory and the anticanonical polarization.  The construction depends on a weight on the divisor.  For smaller weights the stable pairs consist of mildly singular surfaces and very singular divisors. Conversely, a larger weight allows more singular surfaces, but it restricts the singularities on the divisor. The one-dimensional space of stability conditions decomposes in a wall-chamber structure. We describe all the walls and relate their value to the worst singularities appearing in the compactification locus. Furthermore, we give a complete characterization of stable and polystable pairs in terms of their singularities for each of the compactifications considered.

\end{abstract}
\section{Introduction}

The moduli space of cubic surfaces is a classic space in algebraic geometry. Indeed,  its GIT compactification was first described by Hilbert in 1893  \cite{hilbert}, and  several alternative compactifications have followed it (see  \cite{ishii1982moduli,naruki1982cross,hacking2009stable}).   In this article, we enrich this moduli problem by parametrizing pairs $(S,D)$ where  $S \subset \PP^3$ is a cubic surface, and  $D \in |-K_S|$ is an anticanonical divisor. There are several motivations for our construction. Firstly, it was recently established that the GIT compactification of cubic surfaces corresponds to the moduli space of $K$-stable  del Pezzo surfaces of degree three \cite{Odaka-Spotti-Sun}.  The concept of $K$-stability has a natural generalization to log-K-stability for pairs, and our GIT quotients are natural candidates to construct compactifications of log K-stable pairs of cubic surfaces and their anticanonical divisors. Therefore, our description is a first step toward a generalization of \cite{Odaka-Spotti-Sun} to the log setting, an approach considered in the sequel to this article \cite{gallardo-margar-spotti}.
Secondly,  a precise description of the GIT of cubic surfaces is important for describing the complex hyperbolic geometry of the moduli of cubic surfaces, and constructing new examples of ball quotients (see \cite{allcock2002complex}).
 More specifically, Laza et al. \cite{Laza-Pearlstein-Zheng} predicted a Hodge theoretical compactification of  the moduli space of pairs $(S,D)$ via a particular loci within the moduli space of cubic fourfolds. 
  One may expect that such uniformization coincides with one of the compactifications of the moduli space of pairs $(S,D)$ that we obtain in this article.  Finally, our compactifications explore the setting of variations of GIT quotients for log pairs for which few examples exists (see \cite{laza2009deformations} and \cite[Theorem 11.2]{dolgachev}).

	The GIT quotients considered depend on a choice of a linearization $\mathcal{L}_t$ of the parameter space $\mathcal H$ of cubic forms and linear forms in $\mathbb P^3$. We have that $\mathcal H\cong \mathbb P^{19}\times \mathbb P^3$. Every ample divisor in $\mathrm{Pic}(\mathbb P^{19}\times \mathbb P^{3})\cong \mathbb Z\langle a \rangle \oplus \mathbb Z\langle b \rangle$ is of bidegree $(a,b)$ for some positive integers $a$ and $b$. Thus, the different GIT quotients arising by picking different polarizations of $\mathcal H$ are controlled by the parameter $t=\frac{b}{a}\in \mathbb Q_{>0}$ (see Section \ref{sec:GIT-setup} for a thorough treatment). For each value of $t$,  there is a  GIT compactification  $\overline{M(t)}$ of the moduli space of pairs $(S,D)$ where $S$ is a cubic surface and $D \in |-K_S|$ is an anticanonical divisor.   It follows from the general theory of variations of GIT (see  \cite{thaddeus1996geometric}, \cite{dolgachev1998variation}, c.f. \cite[Theorem 1.1]{VGITour}) that $0\leqslant t\leqslant 1$ and that there are only finitely many different GIT quotients associated to $t$.   Indeed, there is a \emph{set of chambers} $(t_i,t_{i+1})$ where the GIT quotients $\overline{M(t)}$ are isomorphic for all $t\in (t_i, t_{i+1})$, and there are finitely many \emph{GIT walls} $t_1,\ldots, t_{k}$ where the GIT quotient is a birational modification of $\overline{M(t)}$ where $0<|t-t_i|<\epsilon\ll 1$. Additionally there are initial and end walls $t_0=0$ and $t_{k+1}=1$.

\begin{lemma}
\label{lemma:walls}
The GIT walls are
$$t_0=0,\ t_1=\frac{1}{5},\  t_2=\frac{1}{3},\ t_3=\frac{3}{7},\  t_4=\frac{5}{9},\ t_5=\frac{9}{13},\ t_6=1.$$
\end{lemma}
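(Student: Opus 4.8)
The plan is to run the Hilbert--Mumford numerical criterion for the $\mathrm{SL}_4$-action on $\mathcal H=\mathbb P^{19}\times\mathbb P^3$ with the linearization $\mathcal L_t$, and to read off the walls from the piecewise-linear structure of the weight function. By the numerical criterion it is enough to test one-parameter subgroups, and after conjugating by $\mathrm{SL}_4$ we may take $\lambda$ diagonal, with weights $r_0\geq r_1\geq r_2\geq r_3$, $\sum_i r_i=0$, normalized to lie in a fixed fundamental domain of the Weyl chamber. For a pair $(S,D)=(V(F),V(L))$ the Mumford weight for $\mathcal L_t$ is additive over the two factors, so it splits as
$$\mu_t\big((F,L),\lambda\big)=\mu(F,\lambda)+t\,\mu(L,\lambda),$$
with $\mu(F,\lambda)=\max\{-\langle\mathbf a,\mathbf r\rangle : x^{\mathbf a}\text{ a monomial of }F\}$ (the maximum over degree-three exponent vectors) and $\mu(L,\lambda)=\max\{-r_i : x_i\text{ occurs in }L\}$, in the normalization fixed in Section~\ref{sec:GIT-setup}. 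The point is that, having fixed $\lambda$ and the monomial supports realizing these two maxima, $\mu_t$ is affine-linear in $t$; since there are only finitely many monomials (twenty for $F$, four for $L$) there are only finitely many such affine functions, and the set of $t$-semistable pairs can change only at the finitely many $t$ where one of them vanishes. Together with the boundary values $t_0=0$ and $t_{k+1}=1$ this recovers the finiteness already known from \cite{thaddeus1996geometric,dolgachevhu}; the work is to make the list precise.

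To pin the walls down, I would use that a wall at $t^\ast$ is a value where a new strictly semistable, non-stable orbit appears: equivalently there is a diagonal $\lambda$ as above and a pair $(F,L)$, supported on the monomials of $\lambda$-weight $\leq 0$, with $\mu_{t^\ast}((F,L),\lambda)=0$ and with $\lambda$ contained in the stabilizer of the polystable representative of its orbit-closure. The one-parameter subgroups relevant here fall into finitely many combinatorial types, each determined by the partition of the twenty-four monomials into those of positive, zero and negative $\lambda$-weight; for each type the ``most destabilized'' such pair is determined, and it contributes the single linear equation $\mu(F,\lambda)+t\,\mu(L,\lambda)=0$. Solving these equations, keeping the roots in $(0,1)$, and retaining those that genuinely change the quotient should return exactly $\tfrac15,\tfrac13,\tfrac37,\tfrac59,\tfrac9{13}$. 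Each of these is expected to be the threshold at which a specific degeneration of the pair switches between semistable and unstable --- e.g.\ a cubic acquiring a worse singularity while $D$ passes through it, the surface degenerating to a cone or to a reducible or non-reduced configuration, or $D$ becoming tangent to a prescribed order along a line on $S$ --- and for each one must exhibit such a pair together with the destabilizing $\lambda$ on one side of the wall, and check that on the open chambers between consecutive candidates the stable and semistable loci coincide with those just described, so that no further wall occurs.

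The main obstacle is the combinatorial bookkeeping in the second step: enumerating the diagonal one-parameter subgroups and their maximal destabilizing configurations so that the candidate list is provably complete, and matching each surviving candidate with an actual change of GIT quotient. In practice this is where one either carries out a finite computation over all monomial supports (the cells of the hyperplane arrangement $\{\langle\mathbf a,\mathbf r\rangle=\langle\mathbf a',\mathbf r\rangle\}$ in the normalized Weyl chamber) or runs a case analysis indexed by the degeneration types of $(S,D)$; either way the delicate points are showing that the list is exhaustive and ruling out ``fake walls'' at which the two maxima rearrange but the semistable locus does not actually change.
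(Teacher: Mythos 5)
Your overall strategy --- reduce to diagonal one-parameter subgroups, observe that $\mu_t$ is affine in $t$ with only finitely many possible slopes and intercepts coming from the $20+4$ monomials, extract candidate walls as zeros in $(0,1)$, then prune fake walls by checking whether the (semi)stable locus actually changes --- is the same skeleton the paper follows via the variation-of-GIT machinery of \cite{VGITour}. The paper makes each of these steps effective and then executes them: the continuum of diagonal $\lambda$'s is replaced by the provably sufficient finite set $S_{2,3}$ of Lemma \ref{lemma:critical1PS} (imported from \cite[Lemma 3.2]{VGITour}), your ``most destabilized'' configurations are the maximal sets $N^\oplus_t(\lambda,x_i)$ of Lemma \ref{lemma:max}, the strictly semistable closed orbits at each candidate wall are pinned down via the sets $N^0_t(\lambda,x_i)$ of Lemma \ref{lemma:annihilators} and matched with explicit $\mathbb{C}^*$-invariant pairs in Theorem \ref{thm:GITb}, and a candidate is declared a genuine wall exactly because the stable loci of Theorem \ref{thm:stableGIT} change across it; indeed the paper states that Lemma \ref{lemma:walls} follows once Theorem \ref{thm:GITb} is proven.

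The gap is that your proposal stops where the content of the lemma begins: the values $\tfrac15,\tfrac13,\tfrac37,\tfrac59,\tfrac9{13}$ are only ``expected'' to come out of an enumeration that is never performed, and the two issues you yourself flag as delicate --- completeness of the finite list of one-parameter subgroups/monomial supports, and exclusion of fake walls --- are precisely what a proof must supply. The paper resolves the first by the algorithmic computation of \cite{gallardo-jmg-experimental,gallardo-jmg-code} resting on \cite[Lemma 3.2]{VGITour} (merely asserting ``finitely many combinatorial types'' from the hyperplane arrangement does not yet show that one representative per cell detects all destabilizing $\lambda$'s uniformly in $t$), and the second by exhibiting, for each $t_i$, a strictly $t_i$-semistable pair with closed orbit and infinite stabilizer that fails to be semistable or stable on one side of the wall --- these are the pairs $(S_i,D_i)$ of Theorem \ref{thm:GITb}, identified geometrically through Lemma \ref{lemma:Gm-action}. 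Without carrying out that finite computation (or an equivalent hand analysis) and the wall-by-wall verification, the explicit list of walls asserted in the lemma is not established.
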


Given $t\in \mathbb Q_{>0}$ we say that a pair $(S,D)$ is \emph{$t$-stable} (respectively \emph{$t$-semistable}) if it is $t$-stable (respectively $t$-semistable) under the $\SL(4,\mathbb C)$-action. A pair is \emph{strictly $t$-semistable }if it is $t$-semistable but not $t$-stable. The space $M(t)$ parametrizes $t$-stable pairs and $\overline{M(t)}$ parametrizes closed $t$-semistable orbits. 

The GIT walls can be interpreted geometrically as follows. Let $T$ be one of the possible isolated  singularities in a cubic surface (see Proposition \ref{proposition:singcub}),  let $w(T)$ be the sum of its  associated weights (see Definition \ref{def:ADEweights}). For example, the set of weights for the $\boldsymbol{A}_n$ singularity  is 
$\left( \frac{1}{2}, \frac{1}{2}, \frac{1}{n+1}  \right)$ and $w(\boldsymbol{A}_n)=\frac{n+2}{n+1}$.  We define $\mathrm{Wall}(T)\coloneqq\frac{4}{w(T)}-3$.
\begin{theorem}\label{theorem:GITwalls}
	There are $13$ non-isomorphic  GIT quotients $\overline{M(t)}$. Seven of these quotients correspond to the walls $t_i$ in Lemma \ref{lemma:walls} and they can be recovered as $t_i=\mathrm{Wall}(T)$ for each isolated ADE singularity $T$ occurring at a point $p$ of an irreducible cubic surface $S$ such that
	\begin{enumerate}[(i)]
			\item for some $D\in| -K_S|$, the log pair $(S,D)$ is strictly $t_i$-semistable,
			\item for $t'<t_i$ $(S,D)$ is $t'$-unstable, and
			\item $p\not\in \mathrm{Supp}(D)$ unless $t=0$.
	\end{enumerate}
	Indeed, the values of the walls are:
	\begin{align*}
	t_0&=\mathrm{Wall}(\boldsymbol{A}_2)=0, &t_1=\mathrm{Wall}(\boldsymbol{A}_3)= \frac{1}{5},\quad  &t_2=\mathrm{Wall}(\boldsymbol{A}_4)=\frac{1}{3}, \\
	t_3&=\mathrm{Wall}(\boldsymbol{A}_5)=\mathrm{Wall}(\boldsymbol{D}_4)= \frac{3}{7}, &t_4=\mathrm{Wall}(\boldsymbol{D}_5)= \frac{5}{9},\quad &\\
	t_5&=\mathrm{Wall}({\boldsymbol{E}}_6)=\frac{9}{13}, & t_6=\mathrm{Wall}(\widetilde{\boldsymbol{E}}_6)=1.\quad &
	\end{align*}
	The other six GIT quotients $\overline{M(t)}$ correspond to  linearizations $t\in (t_i, t_{i+1})$, $i=1,\ldots,6$. All the points in $\overline{M(t_0)}$ and $\overline{M(t_6)}$ correspond to strictly semi-stable pairs, while all other $\overline{M(t)}$ with $t\in(0,1)$ have stable points. The GIT quotient is empty for any $t \not\in [0,1]$.
\end{theorem}
We will learn in Section \ref{sec:Classification} that the walls $t_i$ and classification of the log pairs $(S,D )$ parametrized by $\overline{M(t)}$ depend on both the singularities of the surface and the divisor $D$ in a complementary way. Indeed, the singularities of the surfaces will be worse when $t$ approaches $1$ while the singularities of the hyperplane section will be worse when $t$ approaches $0$ (see Table \ref{tab:VGITcubics}).

Furthermore, we have a complete analysis of the stability of pairs $(S, D)$ represented in $\overline {M(t)}$ and $M(t)$ for each $t$ in the space of stability conditions $[0,1]\cap \mathbb Q$. Specifically for each $t\in (0,1)\cap \mathbb Q$, in Theorem \ref{theorem:stableGIT} and Table \ref{tab:VGITcubics} we give a list of all $t$-stable pairs represented in $M(t)$, and in Theorem \ref{theorem:GITb} and Table \ref{tab:VGITboundary}, we classify all strictly $t$-semistable pairs with close orbits, which compactify $M(t)$ into $\overline{M(t)}$.  
The quotient $\overline{M(0)}$ is isomorphic to the GIT of cubic surfaces and the quotient $\overline{M(1)}$ is the GIT of plane cubic curves (see \cite[Lemma 4.1]{VGITour}). These spaces are classical and they are described in \cite[Sec 7.2(b)]{mukai} and \cite[Example 7.12]{mukai} respectively.
Henceforth we will focus on the case $t\in (0,1)$. As mentioned earlier, the following theorem gives a first approximation to the classification of log stable pairs of other stability theories, in particular for log K-stability (and the existence of K\"ahler-Einstein metrics with conical singularities along a boundary). This was first observed for cubic surfaces (no boundary) by Ding and Tian in \cite{Ding-Tian-YTDconjecture}.

\subsection*{Notation used, structure of the article and acknowledgements}
Throughout the article a pair $(S, D)$ consists of a cubic surface $S\subset\mathbb P^3_{\mathbb C}$ and an anticanonical section $D\in |-K_S|\cong \mathbb P(H^0(S, \mathcal O_S(1)))$ Hence, $D=S\cap H$ for some hyperplane $H=\{l(x_0,\ldots,x_{3})=0\}\subset \mathbb P^3_{\mathbb C}$. Whenever we consider a parameter $t\in (t_{i}, t_{i+1})$ we implicitly mean $t\in (t_{i}, t_{i+1})\cap \mathbb Q$.

In Section \ref{sec:GIT-setup} we describe in detail the GIT setting we consider. We introduce the required singularity theory in Section \ref{sec:sing}.   GIT-stability depends on a finite list of geometric configurations characterized in Section \ref{sec:1ps}. We  prove Theorem \ref{theorem:stableGIT} in Section \ref{sec:stable}.  We prove Theorems \ref{theorem:GITwalls} and \ref{theorem:GITb}  in Section \ref{sec:stable}. 

Our article does an extensive use of J.W. Bruce and C.T.C. Wall's elegant classification of singular cubic surfaces \cite{classificationcubics} in the modern language of Arnold.  Our results use some computations done via software. The computations, together with full source code written in Python can be found in \cite{gallardo-jmg-code}. The code is based on the theory developed in our previous article \cite{VGITour} and a rough idea of the algorithm can be found there.
The source code and data, but not the text of this article, are released under a Creative Commons CC BY-SA $4.0$ license. See \cite{gallardo-jmg-code} for details. If you make use of the source code and/or data in an academic or commercial context, you should acknowledge this by including a reference or citation to \cite{VGITour} --- in the case of the code --- or to this article --- in the case of the data.

P. Gallardo is supported by the NSF grant DMS-1344994 of the RTG in Algebra, Algebraic Geometry, and Number Theory, at the University of Georgia. This work was completed at the Hausdorff Research Institute for Mathematics (HIM) during a visit by the authors as part of the Research in Groups project \emph{Moduli spaces of log del Pezzo pairs and K-stability}. We thank HIM for their generous support. The final version of the article was completed while the second author was a visitor of the Max Planck Institute for Mathematics in Bonn. He thanks MPIM for their generous support.

We thank Radu Laza and Cristiano Spotti for useful discussions. We thank an anonymous referee for many suggestions which have improved the presentation considerably.
\section{Classification of stable orbits and compactification log pairs}
\label{sec:Classification}

A nice feature of $M(t)$ is that for each $t\in (0,1)$ and each $t$-stable pair $(S,D)$, the surface $S$ has isolated ADE singularities. The classification is simplified by using the notion of `worse singularity'. Roughly speaking, a singularity germ $T_1$ is worse than a singularity $T_2$ if the former can be partially deformed into the latter. See Definition \ref{def:dgd} and Figure \ref{fig:adj} for a formal definition. Table \ref{tab:VGITcubics} gives a summary of the $t$-stable pairs $(S,D)$ for each $t$ in terms of their \emph{worst} singularities and the intersection of the components of $D$.

\begin{table}[!h]\label{tab:VGITcubics}
	\begin{center}
		\begin{tabular}{ |p{1cm}||p{2.1cm}|p{2.1cm}|p{2.1cm}|p{2.1cm}|}  \hline
			$t$ & $(0,\frac{1}{5})$ & $\frac{1}{5}$& $(\frac{1}{5},\frac{1}{3})$ & $\frac{1}{3}$\\  
			\hline
			{$\mathrm{Sing}(S)$ } & $\boldsymbol{A}_2$ & $\boldsymbol{A}_2$& $\boldsymbol{A}_3$  &$\boldsymbol{A}_3$ \\
			\hline
			{$\mathrm{Sing}(D)$ } & on smooth or $\boldsymbol{A}_1\in S$ & isolated on smooth or $\boldsymbol{A}_1\in S$ & isolated on smooth or $\boldsymbol{A}_1\in S$ & isolated or cuspidal at $\boldsymbol{A}_1\in S$ 
			\\
			\hline\hline
			$t$ & $(\frac{1}{3},\frac{3}{7})$ & $\frac{3}{7}$ & $(\frac{3}{7},\frac{5}{9})$ & $\frac{5}{9}$ \\ \hline
			{$\mathrm{Sing}(S)$ }  & $\boldsymbol{A}_4$ & $\boldsymbol{A}_4$ & $\boldsymbol{A}_5$, $\boldsymbol{D}_4$ & $\boldsymbol{A}_5$, $\boldsymbol{D}_4$ \\
			\hline
			{$\mathrm{Sing}(D)$ }& isolated or cuspidal at $\boldsymbol{A}_1\in S$ & tacnodal or normal crossings at $\boldsymbol{A}_1\in S$  & tacnodal or normal crossings at $\boldsymbol{A}_1\in S$& cuspidal  or normal crossings at $\boldsymbol{A}_1\in S$\\
			\hline\hline
			$t$ & $(\frac{5}{9},\frac{9}{13})$ & $\frac{9}{13}$& $(\frac{9}{13},1)$ & \\
			\hline
			{$\mathrm{Sing}(S)$ }  & $\boldsymbol{A}_5$, $\boldsymbol{D}_5$ & $\boldsymbol{A}_5$, $\boldsymbol{D}_5$  & ${\boldsymbol{E}}_6$ & \\
			\hline
			{$\mathrm{Sing}(D)$ }& cuspidal  or normal crossings at $\boldsymbol{A}_1\in S$& normal crossings on smooth or $\boldsymbol{A}_1\in S$& normal crossings on smooth or $\boldsymbol{A}_1\in S$&  \\
			\hline
			
	\end{tabular} \end{center}
	\caption{Worst possible singularities in a $t$-stable pair $(S,D)$ for each $t\in(0,1)$.}
\end{table}

See Table \ref{tab:cubic-curves} to reinterpret $D$ in the language of ADE singularities. Our first classification result describes the stable orbits of $M(t)$ in terms of their singularities:
\begin{theorem}\label{theorem:stableGIT} Consider a pair $(S,D)$ formed by a cubic surface $S$ and a hyperplane section $D\in |-K_S|$.
	\begin{enumerate}[(i)]
		\item Let $t\in (0, \frac{1}{5})$. The pair $(S,D)$ is $t$-stable if and only if $S$ has finitely many singularities at worst of type $\boldsymbol{A}_2$ and if $P\in D$ is a surface singularity, then $P$ is at worst an $\boldsymbol{A}_1$ singularity of $S$. In particular $D$ may be non-reduced.
		\item Let $t=\frac{1}{5}$. The pair $(S,D)$ is $t$-stable if and only if $S$ has finitely many singularities at worst of type $\boldsymbol{A}_2$, $D$ is reduced, and if $P\in D$ is a surface singularity, then $P$ is at worst an $\boldsymbol{A}_1$ singularity of $S$.
		\item Let $t\in (\frac{1}{5}, \frac{1}{3})$. The pair $(S,D)$ is $t$-stable if and only if $S$ has finitely many singularities at worst of type $\boldsymbol{A}_3$, $D$ is reduced and if $P\in D$ is a surface singularity, then $P$ is at worst an $\boldsymbol{A}_1$ singularity of $S$.
		\item Let $t=\frac{1}{3}$. The pair $(S,D)$ is $t$-stable if and only if $S$ has finitely many singularities at worst of type $\boldsymbol{A}_3$, $D$ is reduced and if $P\in D$ is a surface singularity, then $P$ is at worst an $\boldsymbol{A}_1$ singularity of $S$ and $D$ has at worst a cuspidal singularity at $P$.
		\item Let $t\in (\frac{1}{3}, \frac{3}{7})$. The pair $(S,D)$ is $t$-stable if and only if $S$ has finitely many singularities at worst of type $\boldsymbol{A}_4$, $D$ is reduced and if $P\in D$ is a surface singularity, then $P$ is at worst an $\boldsymbol{A}_1$ singularity of $S$ and $D$ has at worst a normal crossing singularity at $P$ as a plane cubic curve.
		\item Let $t=\frac{3}{7}$. The pair $(S,D)$ is $t$-stable if and only if $S$ has finitely many singularities at worst of type $\boldsymbol{A}_4$, $D$ has at worst a tacnodal singularity and if $P\in D$ is a surface singularity, then $P$ is at worst an $\boldsymbol{A}_1$ singularity of $S$ and $D$ has at worst a normal crossing singularity at $P$ as a plane cubic curve.
		\item Let $t\in (\frac{3}{7}, \frac{5}{9})$. The pair $(S,D)$ is $t$-stable if and only if $S$ has finitely many singularities at worst of type $\boldsymbol{A}_5$ or $\boldsymbol{D}_4$, $D$ has at worst a tacnodal singularity and if $P\in D$ is a surface singularity, then $P$ is at worst an $\boldsymbol{A}_1$ singularity of $S$ and $D$ has at worst a normal crossing singularity at $P$ as a plane cubic curve.
		\item Let $t=\frac{5}{9}$. The pair $(S,D)$ is $t$-stable if and only if $S$ has finitely many singularities at worst of type $\boldsymbol{A}_5$ or $\boldsymbol{D}_4$, $D$ has at worst an $\boldsymbol{A}_2$ singularity and if $P\in D$ is a surface singularity, then $P$ is at worst an $\boldsymbol{A}_1$ singularity of $S$ and $D$ has at worst a normal crossing singularity at $P$ as a plane cubic curve.
		\item Let $t\in (\frac{5}{9}, \frac{9}{13})$. The pair $(S,D)$ is $t$-stable if and only if $S$ has finitely many singularities at worst of type $\boldsymbol{A}_5$ or $\boldsymbol{D}_5$,  $D$ has at worst a cuspidal singularity and if $P\in D$ is a surface singularity, then $P$ is at worst an $\boldsymbol{A}_1$ singularity of $S$ and $D$ has at worst a normal crossing singularity at $P$ as a plane cubic curve.
		\item Let $t=\frac{9}{13}$. The pair $(S,D)$ is $t$-stable if and only if $S$ has finitely many singularities at worst of type $\boldsymbol{A}_5$ or $\boldsymbol{D}_5$,  $D$ has at worst normal crossing singularities  as a plane cubic curve and if $P\in D$ is a surface singularity, then $P$ is at worst an $\boldsymbol{A}_1$ singularity of $S$.
		\item Let $t\in (\frac{9}{13}, 1)$. The pair $(S,D)$ is $t$-stable if and only if $S$ has finitely many ADE singularities, $D$ has at worst normal crossing singularities  as a plane cubic curve and if $P\in D$ is a surface singularity, then $P$ is at worst an $\boldsymbol{A}_1$ singularity of $S$.
	\end{enumerate}
\end{theorem}

The next theorem gives a full of classification of the pairs $(S,D)$ associated to each of the unique closed orbits in $\overline{M(t)}\setminus M(t)$ for each $t\in (0,1)$. Normal cubic surfaces with a $\mathbb C^*$-action 
have been classified \cite[{Table $3$}]{du2000hypersurfaces}. They play a central role in our classification,
as they are all realized as part of some strictly semistable log pair of some wall.

Figure \ref{fig:GITSB} gives sketches of each of these pairs and Table \ref{tab:VGITcubics} summarises these orbits.
Recall that an \emph{Eckardt point} of a cubic surface $S$ is a point where three coplanar lines of $S$ intersect.
\begin{figure}[!h]\label{fig:GITSB}
	\begin{center}
		\includegraphics[scale=.5]{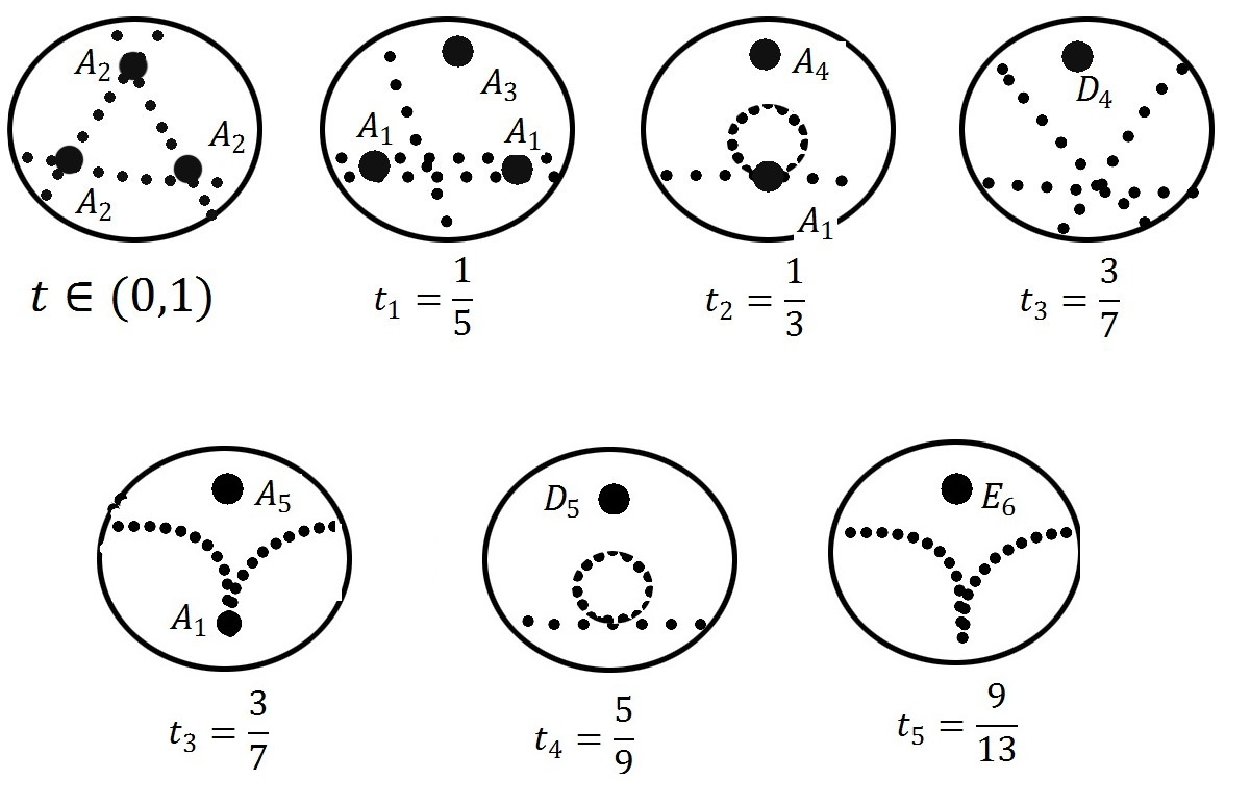}
		\caption{Pairs in $\overline{M(t)}\setminus M(t)$ for each $t\in(0,1)$. The dotted lines represent the divisor $D$. The bold points are singularities of the surface.}
		
	\end{center}
\end{figure}
\begin{theorem}\label{theorem:GITb}
	Let $t\in (0,1)$. If $t\neq t_i$, then $\overline {M(t)}$ is the compactification of the stable loci $M(t)$ by the closed $\SL(4,\mathbb C)$-orbit in $\overline {M(t)}\setminus M(t)$ represented by the pair $(S_0,D_0)$, where $S_0$ is the unique $\mathbb C^*$-invariant cubic surface with three $\boldsymbol{A}_2$ singularities and $D_0$ is the union of the unique three lines in $S_0$, each  of them passing through two of those singularities.
	
	If $t=t_i$, $i=1,2,4,5$, then $\overline {M(t_i)}$ is the compactification of the stable loci $M(t_i)$ by the two closed $\SL(4, \mathbb C)$-orbits in $\overline {M(t_i)}\setminus M(t_i)$ represented by the uniquely defined pair $(S_0,D_0)$ described above and the $\mathbb C^*$-invariant pair $(S_i,D_i)$ uniquely defined as follows:
	\begin{enumerate}[(i)]
		\item the cubic surface $S_1$ with an $\boldsymbol{A}_3$ singularity and two $\boldsymbol{A}_1$ singularities and the divisor $D_1=2L+L'\in |-K_S|$, where $L$ and $L'$ are lines such that $L$ is the line containing both $\boldsymbol{A}_1$ singularities and $L'$ is the only line in $S$ not containing any singularities;
		\item the cubic surface $S_2$ with an $\boldsymbol{A}_4$ singularity and an $\boldsymbol{A}_1$ singularity and the divisor $D_2\in |-K_S|$, which is a tacnodal curve singular at the $\boldsymbol{A}_1$ singularity of $S$;
		\item the cubic surface $S_4$ with a $\boldsymbol{D}_5$ singularity and the divisor $D_4\in |-K_S|$, which is a tacnodal curve that does not contain the surface singularity;
		\item the cubic surface $S_5$ with an ${\boldsymbol{E}}_6$ singularity and the cuspidal rational curve $D_5\in |-K_S|$ that does not contain the surface singularity. 
	\end{enumerate}
	
	The space $\overline {M(t_3)}$ is the compactification of the stable loci $M(t_3)$ by the three closed $\SL(4, \mathbb C)$-orbits in $\overline {M(t_3)}\setminus M(t_3)$ represented by the $\mathbb C^*$-invariant pairs uniquely defined as follows:
	\begin{enumerate}[(i)]
		\item the pair $(S_0,D_0)$ described above;
		\item the pair $(S_3,D_3)$, where $S_3$ is the cubic surface with a $\boldsymbol{D}_4$ singularity and and Eckardt point and $D_3$ consists of the unique three coplanar lines intersecting at the Eckardt point;
		\item the pair $(S_3',D_3')$, where $S_3'$ is the cubic surface with an $\boldsymbol{A}_5$ and an $\boldsymbol{A}_1$ singularity and the divisor $D_3'$, which is an irreducible curve with a cuspidal point at the $\boldsymbol{A}_1$ singularity of $S_3'$.
	\end{enumerate}
\end{theorem}

\begin{table}[!b]\label{tab:VGITboundary}
	\begin{center}
		\begin{tabular}{ |P{1cm}||P{2.1cm}|P{1.8cm}|P{1.8cm}|P{1.8cm}|}\hline
			$t$ & $(0,1)$ & \multicolumn{2}{|c|}{ $\frac{1}{5}$}&  $\frac{1}{3}$ \\  
			\hline
			{$\mathrm{Sing}(S)$ } & 3$\boldsymbol{A}_2$ & \multicolumn{2}{|c|}{$\boldsymbol{A}_3+2\boldsymbol{A}_1$}& $\boldsymbol{A}_4+\boldsymbol{A}_1$\\
			\hline
			{$D$} & unique three lines in $S$ & \multicolumn{2}{|p{3.9cm}|}{double line containing $2\boldsymbol{A}_1$ and unique line not containing surface singularities} & Tacnodal curve at $\boldsymbol{A}_1$\\
			\hline\hline
			$t$ & $\frac{3}{7}$ & $\frac{3}{7}$ & $\frac{5}{9}$ & $\frac{9}{13}$ \\ \hline
			{$\mathrm{Sing}(S)$ }  &$\boldsymbol{D}_4$, $S$ has an Eckardt point $p$ &  $\boldsymbol{A}_5+\boldsymbol{A}_1$ &  $p=\boldsymbol{D}_5$  & $p=\boldsymbol{E}_6$\\
			\hline
			{$D$ }& Unique three coplanar lines through $p$  &  $\mathbb C^*$-invariant cuspidal curve at $\boldsymbol{A}_1$, & $\mathbb C^*$-invariant tacnodal curve, $p\not\in D$ & $\mathbb C^*$-invariant cuspidal curve, $p\not\in D$\\
			\hline
			
	\end{tabular} \end{center}
	\caption{Strictly semistable pairs with closed orbits appearing in each $t\in(0,1)$.}
\end{table}

The theory of variations of GIT quotients used to construct these quotients can be used to understand the birational maps among them. In particular, for $\varepsilon>0$ sufficiently small, we have morphisms $\overline {M(\varepsilon)}\rightarrow \overline {M(0)}$ and $\overline{M(1-\varepsilon)}\rightarrow \overline{M(1)}$. 

 By Pinkham's theory on deformation of singularities with $\mathbb C^*$-action, the deformations of negative weight  can be globalized and interpreted as a moduli space of pairs (see \cite[Theorem 2.9]{pinkham1978deformations}). In particular,  the fiber of the map $M(1-\varepsilon) \to {M(1)}$ over a point representing a smooth curve
 with trivial stabilizer
  is isomorphic to the deformation of the $\tilde E_6$ singularity in negative direction modulo the natural action of $\mathbb C^*$ (c.f. \cite[Section 2.4]{laza2009deformations} for an analogue situation with the $N_{16}$ singularity).
Such deformations of $\tilde E_6$ were determined by Looijenga \cite[Theorem 3.4]{Looijenga-Root-Systems}. To make this explicit, let $E$ be a smooth elliptic curve and $p_E \in {M(1)\subset \overline{M(1)}}$ be the point representing it.  The fiber over $p_E$ of $M(1-\epsilon) \to {M(1)}$ is isomorphic to  $(E \otimes E_6)/W(E_6) \cong \mathbb P(1,g_1,g_2,g_3,g_4,g_5,g_6)$ where $g_i$ are the coefficients of the highest root of $E_6$ with respect to a set of simple roots, i.e the fiber is isomorphic to $\mathbb P(1,1,1,2,2,2,3)$.

\section{GIT set-up and computational methods}\label{sec:GIT-setup} 
In this section, we briefly describe the GIT setting for constructing  our compact moduli spaces.  
We refer the reader to \cite{VGITour}, where the problem is thoroughly discussed and solved for pairs formed by a hyperplane and a hypersurface of $\mathbb P^{n+1}$ of a fixed degree. 
Our GIT quotients are given by
\begin{equation*}
\overline{M} \left( \frac{b}{a} \right)
:=
\left( 
\mathbb{P} (H^0(\mathbb{P}^{3}, \oo_{\mathbb{P}^3}(3))) \times \mathbb{P}(H^0(\mathbb{P}^3, \oo_{\mathbb{P}^3}(1)))
\right)^{ss} 
\Big/\!\!\!\!\Big/_{\oo(a,b)} \mathrm{SL}(4, \mathbb{C}),
\end{equation*}
and they depend only of one parameter $t:=\frac{b}{a}\in\mathbb Q_{\geq 0}$.    The use of GIT requires three initial combinatorial steps which are computed with the algorithm described in \cite{VGITour} and implemented in \cite{gallardo-jmg-code}.
 The first step is to find a set of \emph{candidate GIT walls} which includes all GIT walls (see \cite[Theorem 1.1]{VGITour}). Some of these walls may be redundant and they are removed by comparing if there is any geometric change to the $t$-(semi)stable pairs $(S,D)$ for $t=t_i\pm \epsilon$ for $0<\epsilon\ll 1$. The set of candidate GIT walls is precisely the one in Lemma \ref{lemma:walls} and once Theorem \ref{theorem:GITb} is proven this proves Lemma \ref{lemma:walls}.

The second step (see \cite[Lemma 3.2]{VGITour}) is to find the finite set $S_{2,3}$ of one-parameter subgroups that determine the $t$-stability of all pairs $(S,D)$ for all $t$. For convenience,  given a one-parameter subgroup $\lambda=\mathrm{Diag}(r_0, \ldots, r_3)$, we define its dual one as  $\overline{\lambda}=\mathrm{Diag}(-r_3, \ldots, -r_0)$.

\begin{lemma}
\label{lemma:critical1PS}
The elements $S_{2,3}$ are $\lambda_k$ and $\overline{\lambda}_k$ where $\lambda_k$ is one of the following:
\begin{align*}
\lambda_{1} = \mathrm{Diag}(1, 0, 0, -1)  
& &\lambda_{2} =  \mathrm{Diag}(2, 0, -1, -1)
& &\lambda_{3} =\mathrm{Diag}(5, 1, -3, -3)\\
\lambda_{4}=\mathrm{Diag}(13, 1, -3, -11) 
& &\lambda_{5} =\mathrm{Diag}(3, 1, -1, -3) 
& &\lambda_{6} = \mathrm{Diag}(9, 1, -3, -7) \\ 
\lambda_{7}=\mathrm{Diag}(5,5,-3,-7) 
& &\lambda_{8}=\mathrm{Diag}(1, 1, 1, -3)   
& &\lambda_{9}= \mathrm{Diag}(5, 1, 1, -7)\\
\lambda_{10}=\mathrm{Diag}(1, 1, -1, -1) 
&
\end{align*}
\end{lemma}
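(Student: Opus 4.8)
The plan is to apply the Hilbert--Mumford numerical criterion, reduce the verification of $t$-(semi)stability to a combinatorial question about finitely many \emph{states}, and then run the combinatorial search of \cite[Lemma~3.2]{VGITour} specialised to cubic and linear forms on $\p^3$. Write a pair as $(f,l)$ with $f\in H^{0}(\p^3,\oo(3))$ and $l\in H^{0}(\p^3,\oo(1))$. For a one-parameter subgroup $\lambda=\mathrm{Diag}(r_0,r_1,r_2,r_3)$ of the diagonal torus and $t=b/a$, the Hilbert--Mumford weight for the linearisation $\oo(a,b)$ is
\[
\mu_t\bigl((f,l),\lambda\bigr)= -\min\bigl\{\langle\lambda,\alpha\rangle : x^{\alpha}\text{ occurs in }f\bigr\}\;-\;t\,\min\bigl\{r_j : x_j\text{ occurs in }l\bigr\},
\]
and $(S,D)$ is $t$-semistable (resp.\ $t$-stable) if and only if $\mu_t(g\cdot(f,l),\lambda)\ge 0$ (resp.\ $>0$ for $\lambda\ne 0$) for all $g\in\SL(4,\mathbb C)$ and all such $\lambda$. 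Acting by the Weyl group $S_4$ permuting coordinates, one restricts $\lambda$ to the normalised rational polyhedral cone $\mathcal C=\{r_0\ge r_1\ge r_2\ge r_3,\ \sum_i r_i=0\}$.

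The point is that $\mu_t(\,\cdot\,,\lambda)$ depends on $(f,l)$ only through the \emph{state} $\Xi=(\Xi_f,\Xi_l)$, where $\Xi_f$ is the set of (exponents of) degree-$3$ monomials occurring in $f$, a subset of a fixed set of $20$, and $\Xi_l\subseteq\{x_0,\dots,x_3\}$ is the support of $l$. The finitely many hyperplanes $\{\langle\lambda,\alpha-\beta\rangle=0\}$, over pairs $\alpha,\beta$ of such exponents, subdivide $\mathcal C$ into finitely many chambers, on the closure of each of which the combinatorial type of both minima above is constant; hence there $\mu_t$ is linear in $\lambda$ for fixed $t$ and fixed state. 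Since a linear function that is negative somewhere on a cone is already negative on one of its rays, $t$-(semi)stability of \emph{every} pair --- uniformly for $t\in[0,1]$ --- is detected by the one-parameter subgroups spanning the finitely many rays of this subdivision. This reduces matters to a finite, if large, list.

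To extract the minimal list $S_{2,3}$ one keeps only those $\lambda$ that genuinely arise as destabilisers. By Kempf's theory every non-semistable pair has an essentially unique optimal destabilising one-parameter subgroup, determined by the pair only through its state; so one enumerates the candidate states $\Xi$, tests by a linear feasibility computation on $\mathcal C$ whether for some $t\in[0,1]$ there is a primitive $\lambda$ realising $\Xi_f=\{x^\alpha:\langle\lambda,\alpha\rangle\ge c\}$ and $\Xi_l=\{x_j:r_j\ge c\}$ for a threshold $c$ making $\mu_t>0$, retains the inclusion-maximal such states, and records the associated primitive $\lambda$. The resulting set is automatically closed under the reversal $\lambda\mapsto\overline\lambda=\mathrm{Diag}(-r_3,\dots,-r_0)$, since the optimal destabiliser of a state $\Xi$ reverses to that of the state obtained by the coordinate reversal $x_i\leftrightarrow x_{3-i}$. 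This finite enumeration, carried out by the software of \cite{gallardo-jmg-code} --- whose algorithm is described in \cite{gallardo-jmg-experimental}, based on \cite{VGITour} --- returns exactly $\lambda_1,\dots,\lambda_{10}$ and their duals.

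The main difficulty is computational and organisational rather than conceptual: the number of candidate states is very large, so the search must be machine-assisted and its correctness rests on the implementation. Completeness (these one-parameter subgroups suffice to decide $t$-stability for every pair and every $t$) is guaranteed by the chamber argument once the state search is exhaustive; minimality (none of the $\lambda_k$ is redundant) is seen a posteriori from the classification, since by Theorem~\ref{thm:stableGIT} and Table~\ref{tab:VGITcubics} each $\lambda_k$ is the distinguished one-parameter subgroup governing a particular wall $t_i$ or worst ADE singularity, so removing it would change the stable locus.
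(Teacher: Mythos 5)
Your proposal takes essentially the same route as the paper: the paper states this lemma as the output of the general reduction in \cite[Lemma 3.2]{VGITour} (Hilbert--Mumford restricted, after conjugation, to normalized diagonal one-parameter subgroups, with $\mu_t$ depending only on the monomial ``state'' and detected on the finitely many rays of the resulting chamber decomposition), with the explicit list of the $\lambda_k$ produced by the algorithm of \cite{gallardo-jmg-experimental} implemented in \cite{gallardo-jmg-code} --- exactly the argument and computation you describe. Your additional appeals to Kempf's optimal destabilizers and to a posteriori minimality via Theorem \ref{thm:stableGIT} are not needed (and the latter would be circular if minimality were actually part of the claim), but they do not affect the correctness of the core argument.
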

Let $\Xi_k$ be the set of all  monomials in four variables of degree $k$. Let $g\in \SL(4,\mathbb C)$. Suppose $g\cdot S$ is given by the vanishing locus of a homogeneous polynomial $F$ of degree $3$ and $g\cdot D$ is given by the vanishing locus of $F$ and a homogeneous polynomial $l$ of degree $1$.  We say that $F$ and $l$ are \emph{associated} to the pair $(g\cdot S,g\cdot D)$ and to the corresponding pair of sets of monomials. Let $\lambda=\mathrm{Diag}(r_0,\ldots, r_3)$. Denote by $\mathcal S\subseteq \Xi_3$ and $\mathcal D\subseteq \Xi_1$ the monomials with non-zero coefficients in $F$ and $l$, respectively. There is a natural pairing $\langle v, \lambda\rangle\in \mathbb Z$ for any $v\in \Xi_k$, namely $\langle x^{i_0}_0\cdots x^{i_3}_3, \mathrm{Diag}(r_0,\ldots, r_3)\rangle=\sum i_jr_j$. We define
$$\mu_t(g \cdot S, g \cdot D,\lambda)\coloneqq\min_{v\in \mathcal S}\langle v, \lambda \rangle+t\min_{x_i\in \mathcal D}\langle x_i, \lambda\rangle.$$

\begin{lemma}[{Hilbert-Mumford Criterion, see \cite[Lemma 3.2]{VGITour}}]
A pair $(S,D)$, where $D=S\cap H$, is not $t$-stable if and only if there is $g \in \SL_n$ satisfying
$$\mu_t(S,D) = \max_{\substack{\lambda\in S_{2,3}}}\{\mu_t(g \cdot S, g \cdot D,\lambda)\} \geqslant 0.$$
\end{lemma}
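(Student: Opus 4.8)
The plan is to deduce this from the Hilbert--Mumford numerical criterion together with the reduction, carried out in \cite[Lemma~3.2]{VGITour}, of the infinite family of one-parameter subgroups to the finite list $S_{2,3}$ of Lemma \ref{lemma:critical1PS}; indeed the statement is a special case of that lemma, and I would indicate how the specialisation goes. First, recall the setup: writing $V_k=H^0(\mathbb P^3,\oo_{\mathbb P^3}(k))$, the group $G=\SL(4,\mathbb C)$ acts on $X=\mathbb P(V_3)\times\mathbb P(V_1)$ with the linearization $\mathcal L_t=\oo(a,b)=p_1^*\oo(a)\otimes p_2^*\oo(b)$, $t=b/a$. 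By the Hilbert--Mumford criterion a point $x=([F],[l])$ fails to be $t$-stable if and only if there is a non-trivial one-parameter subgroup $\lambda\colon\mathbb C^*\to G$ with $\mu^{\mathcal L_t}(x,\lambda)\leqslant 0$, in the normalisation where semistability means $\mu^{\mathcal L_t}(x,\cdot)\geqslant 0$ everywhere. After conjugating we may take $\lambda=\mathrm{Diag}(r_0,\dots,r_3)$ with $\sum_i r_i=0$; since the Mumford weight is additive over tensor factors and compatible with pullback, $\mu^{\mathcal L_t}(x,\lambda)=a\,\mu^{\oo(1)}([F],\lambda)+b\,\mu^{\oo(1)}([l],\lambda)$, and a monomial-by-monomial computation gives $\mu^{\oo(1)}([F],\lambda)=-\min_{v\in\mathcal S}\langle v,\lambda\rangle$ and $\mu^{\oo(1)}([l],\lambda)=-\min_{x_i\in\mathcal D}\langle x_i,\lambda\rangle$. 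Dividing by $a$ and reversing sign, the failure of $t$-stability becomes: there exists $g\in G$ and a diagonal $\lambda$ with $\mu_t(g\cdot S,g\cdot D,\lambda)\geqslant 0$, in the notation of the excerpt.

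Next I would reduce the test set of one-parameter subgroups to a finite one. Every one-parameter subgroup of $G$ is conjugate to a diagonal one with $r_0\geqslant r_1\geqslant r_2\geqslant r_3$, and the identity $\mu_t(g\cdot S,g\cdot D,h\lambda h^{-1})=\mu_t((h^{-1}g)\cdot S,(h^{-1}g)\cdot D,\lambda)$ trades the search over all $(g,\lambda)$ for a search over $g\in G$ and $\lambda$ in the rational points of the dominant Weyl chamber. For a \emph{fixed} pair of monomial supports $(\mathcal S,\mathcal D)$ --- that is, a fixed ``state'' of $(g\cdot S,g\cdot D)$ --- the function $\lambda\mapsto\mu_t(\mathcal S,\mathcal D,\lambda)$ is concave and piecewise linear, positively homogeneous of degree one in $\lambda$, and affine in $t$, so its maximum over the chamber is realised along one of finitely many extremal rays, those cut out by the walls where the minimising monomials change. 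Letting $\mathcal S$ run over subsets of $\Xi_3$, $\mathcal D$ over subsets of $\Xi_1$, and $t$ over $[0,1]$, one obtains a finite list of candidate destabilising one-parameter subgroups; the algorithm of \cite{VGITour} run in \cite{gallardo-jmg-code} identifies it, for a cubic surface together with a hyperplane, as exactly $S_{2,3}=\{\lambda_k,\overline\lambda_k:1\leqslant k\leqslant 10\}$. The dual subgroups $\overline\lambda_k$ are needed because one must also allow the opposite ordering of coordinates: the longest Weyl element $w_0$, which reverses $(x_0,x_1,x_2,x_3)$, conjugates $-\lambda_k$ into the dominant chamber and sends it to $\overline\lambda_k$, so keeping both $\lambda_k$ and $\overline\lambda_k$ while restricting to the closed chamber does exhaust all destabilisations.

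Combining the two steps, $\sup_\lambda\mu_t(g\cdot S,g\cdot D,\lambda)=\max_{\lambda\in S_{2,3}}\mu_t(g\cdot S,g\cdot D,\lambda)$ for every $g$, so the extremal value $\mu_t(S,D):=\sup_g\max_{\lambda\in S_{2,3}}\mu_t(g\cdot S,g\cdot D,\lambda)$ equals $\sup_\lambda\mu_t(g_0\cdot S,g_0\cdot D,\lambda)$ for an optimal $g_0$, and by the first step $\mu_t(S,D)\geqslant 0$ is precisely the failure of $t$-stability. This is the asserted equivalence.

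The only non-formal ingredient is the completeness of the list $S_{2,3}$: one must check that for no state $(\mathcal S,\mathcal D)$ and no $t\in[0,1]$ does the maximising ray fall outside the ten subgroups of Lemma \ref{lemma:critical1PS} and their duals. This is a finite exercise in convex geometry over the monomial supports in $\Xi_3$ and $\Xi_1$, carried out in \cite[\S3]{VGITour} and executed in \cite{gallardo-jmg-code}; I would invoke it rather than reproduce the enumeration. This is also where I expect the main difficulty to lie --- not in any single computation, but in organising the bookkeeping of states and extremal rays carefully enough that the finiteness and the explicit list can be trusted.
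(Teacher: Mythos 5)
Your proposal is correct and matches the paper's treatment: the paper offers no independent proof of this lemma, simply quoting \cite[Lemma 3.2]{VGITour}, and your argument is exactly the standard one behind that citation --- additivity of the Hilbert--Mumford weight on $\mathbb P(V_3)\times\mathbb P(V_1)$, reduction to diagonal one-parameter subgroups via conjugation, and the combinatorial reduction to the finite set $S_{2,3}$ (with the duals $\overline\lambda_k$ accounting for the opposite coordinate ordering), whose completeness you rightly defer to \cite{VGITour} and the accompanying computation. If anything, your sketch supplies more detail (the sign bookkeeping and the convexity/finiteness heuristic) than the paper itself does.
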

Given $t\in(0,1)$, and $\lambda\in S_{2,3}$ and $i\in \{0,\ldots,3\}$, the next step is to find the pairs of sets $N^{\oplus}_t(\lambda,x_i) 
\coloneqq\left(V^\oplus_t(\lambda, x_i), B^\oplus(x_i)\right)$ defined as:
\begin{align*}
V^\oplus_t(\lambda, x_i) 
&=
 \{v \in \Xi_d \ | \ \langle v, \lambda\rangle + t\langle x_i,\lambda\rangle> 0\}, 
& &
B^\oplus(x_i)=\{x_k \in \Xi_1 \ | \ k\leqslant i\},   \notag
\end{align*} 
which are maximal with respect to the containment order. Since by  \cite[Lemma 3.2]{VGITour}, we only need to consider the one-parameter subgroups in Lemma \ref{lemma:critical1PS}, which is a finite computation. Hence, they can be computed using computer software \cite{gallardo-jmg-code}. A more detailed algorithm can be found in \cite{VGITour}.
\begin{theorem}[{\cite[Theorem  1.4]{VGITour}}]
\label{theorem:unstMain}
Let $t\in (0,1)$. A pair $(S,S \cap H)$ is not $t$-stable if and only if there exists $g \in \SL(4, \mathbb{C})$ such that the set of monomials associated to  $(g\cdot S, g \cdot H)$ is contained in a pair of sets $N^\oplus_t(\lambda, x_i)$.
\end{theorem}
Given $N^\oplus_t(\lambda, x)$, define $N^{0}_t(\lambda,x_i)  \coloneqq\left(V^0_t(\lambda, x_i), B^0(x_i)\right)$  (see \cite[Proposition 5.3]{VGITour}) where
$$
V^0_t(\lambda, x_i)\times B^0(x_i)=\{(v,m)\in V^\oplus_t(\lambda, x_i)\times B^\oplus(x_i) \ | \ \langle v, \lambda \rangle + t\langle m, \lambda \rangle=0\}.
$$
\begin{theorem}[{\cite[Theorem 1.6]{VGITour}}]
\label{theorem:closedOrbit}
Let $t\in (0,1)$. If a pair $(S,S\cap H)$ belongs to a closed strictly $t$-semistable orbit, then there exist $g\in \SL(4,\mathbb C)$, $\lambda\in S_{2,3}$ and $x_i$ such that the set of monomials associated to $(g\cdot S, g\cdot D)$ corresponds to those in a pair of sets $N^0_t(\lambda,x_i)$.
\end{theorem}

\section{Preliminaries in singularity theory}\label{sec:sing}
We recall the admissible singularities in normal cubic surfaces.
\begin{proposition}[{\cite{classificationcubics}}]
\label{proposition:singcub}
Let $X$ be an irreducible and reduced cubic surface and $p \in X$ be an isolated singular point. Then, the singularity at $p$ is either a Du val singularity (of type $\boldsymbol{A}_k$, $\boldsymbol{D}_k$ with  $ k \leq 5$ or ${\boldsymbol{E}}_6$), or a cone over a smooth elliptic curve (i.e. a simple elliptic singularity  of type $\widetilde{\boldsymbol{E}}_6$). 
\end{proposition}

\begin{definition}[{\cite[p.88]{arnoldinvetiones}}]
\label{def:dgd}
A class of singularities $T_2$ is adjacent to a class $T_1$, and one writes 
$T_1 \gets T_2$ if every germ of $f \in T_2$  can be locally deformed  into a germ in  $T_1$ by an arbitrary small deformation. We say that  the singularity $T_2$ is \emph{worse} than $T_1$; or that $T_2$ is a \emph{degeneration} of $T_1$. 
\end{definition}
The  degenerations of the isolated singularities that appear in a cubic surface (or in their anticanonical divisors, which are plane cubic curves) are  described in Figure \ref{fig:adj} (for details see \cite[p. 88]{arnoldinvetiones} and \cite[\S 13]{ArnoldLong}). 
\begin{figure}[h!]
\centerline{
\xymatrix{
\boldsymbol{A}_1 & \boldsymbol{A}_2 \ar[l]  & \boldsymbol{A}_3 \ar[l] & \boldsymbol{A}_4 \ar[l]  & \boldsymbol{A}_5  \ar[l]
\\
       &                      &  \boldsymbol{D}_4   \ar[u]      & \boldsymbol{D}_5  \ar[l] \ar[u] &  {\boldsymbol{E}}_6 \ar[l] \ar[u] & \widetilde{\boldsymbol{E}}_6 \ar[l]
}	
}
\caption{Degeneration of germs of isolated singularities appearing in cubic surfaces.}
\label{fig:adj}
\end{figure}
The above theory considers only local deformations of singularities. When we study degenerations in the GIT quotient we are interested in global deformations. 
\begin{lemma}[{\cite[Theorem  1]{shustin1999versal}, c.f. \cite{Hacking-Prokhorov}}]\label{lemma:versal}
Let $V(T_1, \ldots T_r )$ be the set of cubic hypersurfaces in $\mathbb P^n$ for $n\leqslant 3$ with $r$ isolated singular points of types $T_1, \ldots T_r$. The germ of the linear system $|\mathcal{O}_{\mathbb{P}^3}(3)|$ at any $X\in V( T_1, \ldots T_r )$ is a joint versal deformation of all singular points of $X$ if $\sum_{i=1}^{r}\mu(T_i) \leq 9$ where $\mu(T_i)$ is the Milnor number  of $T_i$. 
\end{lemma}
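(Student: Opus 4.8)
The plan is to reduce the versality statement to a cohomology vanishing on $\mathbb{P}^n$ and then verify the vanishing using the numerical bound $\sum_i\mu(T_i)\le 9$. For the first reduction, recall that the semi-universal deformation of an isolated hypersurface singularity $(V(f),0)\subset(\mathbb{C}^m,0)$ has as base the Tjurina algebra $\mathcal{O}_{\mathbb{C}^m,0}/(f,\partial_1 f,\dots,\partial_m f)$, of dimension $\tau\le\mu$ (with $\tau=\mu$ for quasi-homogeneous, in particular $ADE$, singularities). Given $X=V(F)\subset\mathbb{P}^n$ of degree $d$ with $\operatorname{Sing}(X)=\{p_1,\dots,p_r\}$, the tautological family over $B:=|\mathcal{O}_{\mathbb{P}^n}(d)|$ restricts near each $p_i$ to a deformation of the germ $(X,p_i)$, and the associated Kodaira--Spencer map $H^0(\mathbb{P}^n,\mathcal{O}(d))\to\bigoplus_i T^1_{X,p_i}$ sends a form $G$ to the tuple of its residue classes in the Tjurina algebras. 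Since isolated hypersurface singularities admit a versal deformation and infinitesimal versality implies versality, the family is a joint versal deformation of the $p_i$ if and only if this map is surjective.

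Next I would translate surjectivity into cohomology. Euler's identity $dF=\sum_j x_j\partial_j F$ (we are in characteristic zero) shows the homogeneous Jacobian ideal $\mathcal{J}_F=(\partial_0 F,\dots,\partial_n F)$ contains $F$, so locally at each $p_i$ it equals the Tjurina ideal; hence $\mathcal{O}_{\mathbb{P}^n}/\widetilde{\mathcal{J}}_F$ is the structure sheaf of a zero-dimensional scheme $Z$ supported on $\operatorname{Sing}(X)$ with stalk $T^1_{X,p_i}$ at $p_i$, so $\deg Z=\sum_i\tau(X,p_i)\le\sum_i\mu(T_i)\le 9$. From $0\to\mathcal{I}_Z(3)\to\mathcal{O}_{\mathbb{P}^n}(3)\to\mathcal{O}_Z\to 0$ and $H^1(\mathbb{P}^n,\mathcal{O}(3))=0$, the Kodaira--Spencer map is surjective if and only if $H^1(\mathbb{P}^n,\mathcal{I}_Z(3))=0$, i.e.\ $Z$ imposes independent conditions on cubics.

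It then remains to prove $H^1(\mathbb{P}^n,\mathcal{I}_Z(3))=0$ for the Jacobian scheme $Z$ of a cubic hypersurface in $\mathbb{P}^n$, $n\le 3$, with $\deg Z\le 9$. The cases $n\le 2$ are elementary: a plane cubic with isolated singularities has $\sum\mu\le 4$, and every length-$\le 4$ zero-dimensional scheme imposes independent conditions on plane cubics (the first obstruction being $5$ collinear points). For $n=3$ one has $\dim H^0(\mathbb{P}^3,\mathcal{O}(3))=20\ge 9\ge\deg Z$, so by the classical analysis of when a zero-dimensional scheme imposes dependent conditions (equivalently, by bounding the Castelnuovo--Mumford regularity of $Z$ by $4$) it suffices to exclude a length-$\ge 5$ subscheme of $Z$ on a line and a length-$\ge 8$ subscheme on a conic. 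This needs the explicit local structure of the Jacobian scheme at each admissible singularity --- e.g.\ at an $\boldsymbol{A}_n$ point it is a length-$n$ subscheme supported on a smooth analytic arc which is tangent to, but in general does not agree to order $n$ with, any line --- together with the bound $\sum_i\mu(T_i)\le 9$, which keeps these local pieces from concentrating on a line or conic in the forbidden way; one checks this against the finite list of possible singularity types $(T_1,\dots,T_r)$, concluding $H^1(\mathbb{P}^3,\mathcal{I}_Z(3))=0$ and hence versality.

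The main obstacle is this last step: one cannot treat $Z$ as a reduced set of points with naive multiplicities, and must instead control the actual scheme structure of the Jacobian subscheme at the singular points, showing that under $\sum_i\mu(T_i)\le 9$ no cubic through $Z\setminus\{p\}$ is ever forced to pass through $p$. This is where the statement ceases to be formal and where \cite{shustin1999versal} does the substantive work; the first two steps are the standard deformation-theoretic and cohomological bookkeeping.
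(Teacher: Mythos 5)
The paper offers no proof of this lemma to compare against: the statement is imported verbatim from \cite[Theorem 1]{shustin1999versal}, and the article's ``proof'' is the citation itself. Judged on its own terms, your first two steps are the correct standard reductions and do match the setup of the cited source: infinitesimal versality implies versality for isolated hypersurface singularities, the Kodaira--Spencer map lands in the sum of Tjurina algebras, Euler's identity identifies the homogeneous Jacobian ideal locally with the Tjurina ideal, and surjectivity becomes $H^1(\mathbb{P}^n,\mathcal{I}_Z(3))=0$ for the Tjurina scheme $Z$ with $\deg Z=\sum_i\tau_i\le\sum_i\mu(T_i)\le 9$.

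The gap is in your third step, which is the only place the hypothesis $\sum_i\mu(T_i)\le 9$ actually enters: the vanishing is never proved. Concretely, (a) the asserted reduction ``it suffices to exclude a length $\ge 5$ subscheme of $Z$ on a line and a length $\ge 8$ subscheme on a conic'' is itself unproved (the regularity bound is only gestured at), and as stated it is not even a complete list of obstructions for a degree-$9$ scheme in $\mathbb{P}^3$: a planar degree-$9$ subscheme that is a complete intersection of two cubics imposes only $8$ conditions on cubics by Cayley--Bacharach, a failure mode lying on neither a line nor a conic in the forbidden way; and (b) the essential local analysis --- determining the scheme structure of the Tjurina subscheme at each admissible singularity (e.g.\ checking that the length-$k$ curvilinear scheme at an $\boldsymbol{A}_k$ point of a cubic does not sit on a line to order $k$) and ruling out the forbidden concentrations across all admissible combinations $(T_1,\dots,T_r)$ --- is only described, and you explicitly defer it to \cite{shustin1999versal}. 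Since that verification is the entire mathematical content of the statement, what you have is an accurate road map consistent with the cohomological framework of the cited proof (which establishes the relevant $h^1$-vanishing for hypersurfaces of any degree by a general argument, not a cubic-specific line/conic case analysis), but not a proof.
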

Recall that $\mu(\boldsymbol{A}_k)=k$, $\mu(\boldsymbol{D}_k)=k$ and $\mu({\boldsymbol{E}}_6)=6$. By checking carefully how these singularities appear together in each cubic surface (see \cite[p. 255]{classificationcubics}) we conclude that $\sum^{r}_{i=1}\mu(T_i)\leqslant 6$ for all cubic surfaces with ADE singularities. Furthermore, by looking at Table \ref{tab:cubic-curves}, we see that $\sum^{r}_{i=1}\mu(T_i)\leqslant 4$ for any plane cubic curve with isolated singularities . Hence, Lemma \ref{lemma:versal} implies that for cubic plane curves and cubic surfaces, any local deformation of isolated singularities is induced by a global deformation.

\begin{definition}[{\cite{classificationcubics}}]
\label{def:ADEweights}
A polynomial $F$ in $n+1$ variables is \emph{semi-quasi-hom\-o\-gen\-eous (SQH)} with respect to the weights
$(w_1,w_2,\ldots, w_{n})$ if all the  monomials of $F$ have weight larger or equal than $1$ and those monomials of weight $1$ define a function with an isolated singularity. In particular, the weights associated to the ADE singularities $\boldsymbol{A}_k$, $\boldsymbol{D}_k$ and ${\boldsymbol{E}}_6$ are
\begin{align*}
\left( \frac{1}{2},\ldots,\frac{1}{2},\frac{1}{k+1} \right),
& & \left( \frac{1}{2}, \ldots, \frac{1}{2},\frac{(k-2)}{2(k-1)}, \frac{1}{k-1} \right),
& &
\left( \frac{1}{2}, \ldots,\frac{1}{2}, \frac{1}{3}, \frac{1}{4} \right),
\end{align*}
respectively. Furthermore, the weight of $\widetilde {\boldsymbol{E}}_6$ is $\left(\frac{1}{2},\ldots,\frac{1}{2}, \frac{1}{3}, \frac{1}{3}, \frac{1}{3} \right)$. These weights are uniquely associated to their respective singularity.
\end{definition}

\begin{lemma}[{\cite[p. 246]{classificationcubics}}]
\label{lemma:recog}
If  $F(x_0,x_1,x_2)$ is SQH with respect to one of the sets of weights in Definition \ref{def:ADEweights} we can, by a locally analytic change of coordinates, reduce the terms of weight $1$ to the normal forms for $\boldsymbol{A}_k$, $\boldsymbol{D}_k$, ${\boldsymbol{E}}_6$, which are locally analytically isomorphic to the following surface singularities:
\begin{align*}
&\boldsymbol{A}_k\colon \ x_1^{k+1}+ x_2^2+x_3^2\ (k\geq 1), &\boldsymbol{D}_k\colon \ x_1^{k-1}+x_1x_2^2 + x_3^2\ (k\geq 4),\qquad\qquad\qquad\\
&{\boldsymbol{E}}_6\colon \ x_1^{3}+ x_2^4+x_3^2, &\widetilde{\boldsymbol{E}}_6\colon \ x_1^3+x_2^3+x_3^3+3\lambda x_1x_2x_3,\quad \lambda^3\neq -1. 
\end{align*}
and the resulting function will remain SQH.
\end{lemma}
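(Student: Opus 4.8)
The plan is as follows. Fix one of the weight systems $\mathbf w=(w_1,w_2,w_3)$ of Definition~\ref{def:ADEweights} and split $F=F_{=1}+F_{>1}$, where $F_{=1}$ collects the monomials of $\mathbf w$-weight exactly $1$ and $F_{>1}$ those of weight $>1$; by the definition of SQH, $F_{=1}$ is quasi-homogeneous of weighted degree $1$ and has an isolated critical point at the origin. I would look for the coordinate change within the class of \emph{filtered} analytic automorphisms of $\mathbb C\{x_1,x_2,x_3\}$, i.e.\ those $\varphi$ with $\varphi(x_i)=x_i+(\text{terms of }\mathbf w\text{-weight}\geq w_i)$. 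Such a $\varphi$ preserves the weight filtration, so every monomial of $\varphi^*F$ still has weight $\geq 1$, and the weight-$1$ part of $\varphi^*F$ is the leading-order action of $\varphi$ on $F_{=1}$ alone. Consequently, once $\varphi^*F_{=1}$ has been brought to one of the listed normal forms --- each of which has an isolated singularity --- the transformed $F$ is automatically SQH, which settles the ``remains SQH'' clause for free. The statement therefore reduces to the weighted-homogeneous problem: for each $\mathbf w$, classify the weighted-degree-$1$ polynomials with isolated singularity up to filtered coordinate change, and check that the only such are, after such a change, the normal forms of the statement (a one-parameter family for $\widetilde{\boldsymbol{E}}_6$).

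This reduced problem is finite-dimensional. For each $\mathbf w$ the space of weighted-degree-$1$ forms in three variables is finite-dimensional, and filtered coordinate changes act on it --- modulo terms of strictly higher weight, which do not affect $F_{=1}$ --- through a finite-dimensional algebraic group: the part mixing variables of equal weight acts as a product of general linear groups, while the ``strictly triangular'' generators only push weight-$1$ monomials into higher weight. I would then run the standard peeling argument, normalising monomials in order of increasing variable weight and using at each step that $F_{=1}$ has an isolated singularity. For the $\boldsymbol{A}_k$ weights $\{\tfrac{1}{2},\tfrac{1}{2},\tfrac{1}{k+1}\}$ this is quick: diagonalise the quadratic part in the two weight-$\tfrac{1}{2}$ variables, complete the square to remove the mixed monomial $x_1^{(k+1)/2}(\alpha x_2+\beta x_3)$ that can occur when $k$ is odd, and then isolatedness forces the quadratic part to be nondegenerate and the coefficient of the pure power in the light variable to be nonzero, so that rescaling gives $x_1^{k+1}+x_2^2+x_3^2$. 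The $\boldsymbol{D}_k$ and ${\boldsymbol{E}}_6$ weights call for the longer but equally mechanical determinacy reduction of \cite{classificationcubics} (equivalently, Arnold's normal-form procedure), which terminates uniquely at $x_1^{k-1}+x_1x_2^2+x_3^2$ and $x_1^3+x_2^4+x_3^2$. For the $\widetilde{\boldsymbol{E}}_6$ weights $\{\tfrac{1}{3},\tfrac{1}{3},\tfrac{1}{3}\}$ every monomial of a cubic already has weight $1$, so $F=F_{=1}$ is simply a plane cubic form with an isolated singularity, hence the affine cone over a smooth plane cubic curve; here I would invoke the classical Hesse normal form to write it as $x_1^3+x_2^3+x_3^3+3\lambda x_1x_2x_3$ with $\lambda^3\neq -1$.

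I expect the genuine work to be the per-case orbit analysis for $\boldsymbol{D}_k$ and ${\boldsymbol{E}}_6$: there the weight system is uneven, several families of weight-$1$ monomials can appear, and one must normalise them in the correct order while repeatedly using the isolated-singularity hypothesis both to kill the inessential monomials and to rigidify the essential part. This is exactly the content of Arnold's theorem that $\boldsymbol{A}_k$, $\boldsymbol{D}_k$, ${\boldsymbol{E}}_6$ are the $0$-modal hypersurface singularities and $\widetilde{\boldsymbol{E}}_6$ the relevant parabolic one, with the stated quasi-homogeneous normal forms and no further moduli; by construction the weights of Definition~\ref{def:ADEweights} are the weights of those normal forms, so granting the classification the first step above goes through. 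Finally, finite determinacy of isolated hypersurface singularities guarantees that the filtered change of coordinates can be realised by an honest locally analytic isomorphism rather than a merely formal one, as the lemma asserts.
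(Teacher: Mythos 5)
The paper does not actually prove this lemma---it is quoted from Bruce and Wall \cite[p.~246]{classificationcubics}---and your sketch is precisely the standard recognition argument underlying that citation: filtered coordinate changes act on the weight-$1$ part through quasi-homogeneous degree-zero substitutions, reducing the statement to the classification of the quasi-homogeneous principal parts ($\boldsymbol{A}_k$ by hand, $\boldsymbol{D}_k$ and ${\boldsymbol{E}}_6$ by Arnold's/Bruce--Wall's normal-form reduction, $\widetilde{\boldsymbol{E}}_6$ by the Hesse normal form), so it takes essentially the same route as the paper's source and is sound. Two cosmetic points only: the degree-zero substitutions acting on the weight-$1$ part are quasi-homogeneous rather than merely linear (e.g.\ $x_2\mapsto x_2+c\,x_1^{(k+1)/2}$ in the odd $\boldsymbol{A}_k$ case, which you in fact use when completing the square), and the closing appeal to finite determinacy is superfluous, since the normalizing coordinate changes are polynomial and hence already locally analytic.
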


Reduced plane cubic curves are completely characterized according to the number and type of their ADE singularities (see Table \ref{tab:cubic-curves}).

\begin{table}[!b]%
\begin{center}
\begin{tabular}{|p{4.2cm}|c||p{4.4cm}|c|}
\hline
Non-singular & - & Cuspidal cubic & $\boldsymbol{A}_2$
\\\hline  
Nodal cubic & $\boldsymbol{A}_1$ &  Three lines intersecting normally& $3\boldsymbol{A}_1$
\\ \hline
Line and conic intersecting  normally & $2\boldsymbol{A}_1$  & 
Three lines intersecting at a point & $\boldsymbol{D}_4$
\\\hline
Line and conic tangent at a point & $\boldsymbol{A}_3$ & &
\\ \hline
\end{tabular}
\caption{
Classification of plane cubic curves with isolated singularities.
}

\label{tab:cubic-curves}
\end{center}
\end{table}

\section{ Geometric characterization of pairs}\label{sec:1ps}
In this section we relate the classifications of pairs in terms of singularity theory and the equations defining them. We have divided our lemmas in four  groups: classification of singular cubic surfaces, classification of pairs $(S,D)$ with singular boundary $D$, classification of pairs $(S,D)$ where $S$ is singular at a point $P\in D$ and classification of pairs $(S,D)$ invariant under a $\mathbb C^*$-action. We will denote homogenous polynomials of degree $d$ in $n+1$ variables as $f_d(x_0,\ldots,x_{n}), g_d$, etc. Recall that pairs $(S, D)$ and $(S',D')$ are projectively equivalent if and only if they are conjugate to each others by elements of $\mathrm{Aut}(\mathbb P^3)$. 

\begin{lemma}[{\cite[Lemma 3]{classificationcubics}}]
\label{lemma:lem3}
Let $F=x_0x_1x_3+f_3(x_0,x_1,x_2)$, $P=(0,0,0,1)$, $Q=(0,0,1,0)$, $H=\{x_3=0\}\cong \mathbb P^2_{(x_0,x_1,x_2)}$ and $H_i=\{x_i=x_3=0\}\subset H$ for $i=0,1$.
\begin{enumerate}
\item The singularities of $\{F=0\}$ other than that at $P$ correspond to the intersection of $C=\{x_0x_1=0\}\subset H$ and $C'=\{f_3=0\}$ at points $R$ other than $Q$. Indeed, if $\mult_{R}(C\cdot C')=k$, then $R$ is an $\boldsymbol{A}_{k-1}$ singularity.

\item If $f_3(0,0,1) \neq 0$, then $P$ is an $\boldsymbol{A}_2$ singularity. Let $k_i=\mult_{Q}(H_i\cdot C')$. If both $k_0$ and $k_1$ are both at least $2$, then $\{F=0\}$ has non-isolated singularities. Otherwise $P$ is an $\boldsymbol{A}_{k_0+k_1+1}$ singularity for $\{ k_0,k_1 \}=\{1,1 \}$, $\{1,2\}, \{1,3\}$. \end{enumerate}
\end{lemma}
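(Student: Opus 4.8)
The plan is to reduce everything to a local computation at the two candidate singular points $P=(0,0,0,1)$ and $Q=(0,0,1,0)$, using the recognition principle for semi-quasi-homogeneous singularities (Lemma \ref{lemma:recog} together with Definition \ref{def:ADEweights}), and to dispose of all other points by a transversality/implicit function theorem argument.

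First I would observe that $\{F=0\}$ is singular exactly where $F=\partial_0 F=\partial_1 F=\partial_2 F=\partial_3 F=0$. Since $\partial_3 F=x_0x_1$, a singular point has $x_0=0$ or $x_1=0$; in either case the remaining equations force the point to lie in $H=\{x_3=0\}$ unless it is $P$ itself (the term $x_0x_1x_3$ vanishes to high order at $P$, and there one checks directly, after dehomogenising with $x_3=1$, that $F=x_0x_1+f_3(x_0,x_1,x_2)$, so the quadratic part is $x_0x_1+(\text{quadratic part of }f_3)$). This already shows that singularities other than $P$ lie on $C\cap C'\subset H$, where $C=\{x_0x_1=0\}$ and $C'=\{f_3=0\}$. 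For part (1), at such a point $R\in C\cap C'$ with $R\neq Q$, after a linear change fixing $H$ I may assume $R=(0,0,1,0)$ is impossible (that is $Q$), so $R$ has nonzero $x_0$ or $x_1$; working in the affine chart where that coordinate is $1$, the surface equation becomes (after absorbing units) $x_3\cdot(\text{linear in the local coordinate coming from the other of }x_0,x_1) + (\text{local equation of }C')$, and the key point is that $x_3$ appears only linearly and multiplied by a local coordinate. Eliminating $x_3$ — equivalently, completing the square in the variable transverse to $C$ — shows the singularity type is governed by the intersection multiplicity $k=\mult_R(C\cdot C')$, giving an $\boldsymbol{A}_{k-1}$ singularity; concretely one massages the local equation into the form $u^2 + (\text{equation of }C' \text{ restricted appropriately})$ and reads off that the $x_1^{k}$-type (or $x_0^k$-type) term of $f_3$ along the branch of $C$ through $R$ is what survives, matching the $\boldsymbol{A}_{k-1}$ normal form $x_1^k+x_2^2+x_3^2$.

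For part (2), the hypothesis $f_3(0,0,1)\neq 0$ means the coefficient of $x_2^3$ in $f_3$ is nonzero, so near $P$ (chart $x_3=1$) the function is $x_0x_1 + c x_2^3 + (\text{other terms})$; I would check this is SQH with respect to the weights $(\tfrac12,\tfrac12,\tfrac13)$ of $\boldsymbol{A}_2$ — the monomials $x_0x_1$ and $x_2^3$ have weight exactly $1$ and define an isolated singularity, and every other monomial of $f_3$ has weight $\geq 1$ — hence by Lemma \ref{lemma:recog} the point $P$ is an $\boldsymbol{A}_2$. For the statement about $Q$: set $k_i=\mult_Q(H_i\cdot C')$ where $H_i=\{x_i=x_3=0\}$, i.e.\ $k_i$ is the order of vanishing at $Q$ of $f_3$ restricted to the line $H_i$. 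In the chart $x_2=1$ the surface is $x_0x_1x_3 + f_3(x_0,x_1,1)=0$; writing $g(x_0,x_1)=f_3(x_0,x_1,1)$, its restriction to $x_1=0$ vanishes to order $k_0$ in $x_0$ and to $x_0=0$ vanishes to order $k_1$ in $x_1$. If both $k_0,k_1\geq 2$ then $g$ has no $x_0$- or $x_1$-linear-only monomial and one sees the whole line $\{x_3=x_0=0\}$ or a one–dimensional locus stays singular, giving non-isolated singularities. Otherwise, say $k_1=1$: then $g$ contains a monomial $x_0^{k_0}$ (and none of lower degree on that axis) and a monomial linear in $x_1$, so eliminating $x_1$ via the $x_0x_1x_3$ term (or completing the square) reduces the equation to the form $x_0^{k_0+k_1+2} + (\text{square}) + (\text{square})$ after the appropriate weighted-homogeneity check, i.e.\ SQH with the $\boldsymbol{A}_{k_0+k_1+1}$ weights, hence an $\boldsymbol{A}_{k_0+k_1+1}$ singularity; the allowed pairs $\{1,1\},\{1,2\},\{1,3\}$ are exactly those for which the total Milnor number stays within the range where a cubic surface can realise them (cf.\ the discussion after Lemma \ref{lemma:versal}, bounding $\sum\mu(T_i)\leq 6$).

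The main obstacle I anticipate is the bookkeeping in the SQH verification at $Q$: one must check not merely that the lowest-weight part has an isolated singularity but that no monomial of $f_3$ (after the coordinate change eliminating $x_3$) has weight strictly less than $1$ with respect to the $\boldsymbol{A}_{k_0+k_1+1}$ weights, and that the coordinate change used to clear the transverse square terms does not introduce such a monomial. I would handle this by choosing the weights $(\tfrac12,\tfrac12,\tfrac{1}{k_0+k_1+2})$ on the local coordinates adapted so that the $x_3$-variable (eliminated) and one of $x_0,x_1$ carry weight $\tfrac12$ while the surviving axial variable carries the small weight, and then a finite check over the finitely many monomials of a cubic does the job — this is exactly the computation already carried out in \cite{classificationcubics}, so I would cite it for the routine cases and only spell out the weight assignment and the elimination step in detail. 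A secondary subtlety is ruling out that $P$ is worse than $\boldsymbol{A}_2$ when $f_3(0,0,1)\neq 0$: here the quadratic part $x_0x_1$ already has rank $2$, so $P$ is at worst a corank-$1$ point, and the cubic term $cx_2^3$ in the transverse direction pins it to $\boldsymbol{A}_2$ exactly.
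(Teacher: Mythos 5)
The paper offers no proof of this lemma (it is quoted from Bruce--Wall), so the only question is whether your reconstruction is sound, and it has a genuine error in part (2). The assertion ``$P$ is an $\boldsymbol{A}_{k_0+k_1+1}$ singularity'' concerns the point $P=(0,0,0,1)$, but your computation is carried out in the chart $x_2=1$ around $Q=(0,0,1,0)$, a chart that does not even contain $P$. Worse, when $\min(k_0,k_1)=1$ the function $g(x_0,x_1)=f_3(x_0,x_1,1)$ has a nonzero linear term, so $Q$ is a \emph{smooth} point of $\{F=0\}$; the elimination you describe there cannot produce the claimed $\boldsymbol{A}_{k_0+k_1+1}$ normal form, and the argument as written would locate the singularity at the wrong point. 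The needed computation is at $P$ in the chart $x_3=1$, where $F=x_0x_1+f_3$: writing $f_3=x_2^2f_1(x_0,x_1)+x_2g_2(x_0,x_1)+g_3(x_0,x_1)$ (the case $f_3(0,0,1)=0$), one eliminates $x_0,x_1$ against the rank-two quadratic part $x_0x_1$ (splitting lemma), solving $\partial_0F=\partial_1F=0$ for $x_0,x_1$ as series in $x_2$, and checks that the residual function of $x_2$ vanishes to order exactly $k_0+k_1+2$, the $k_i$ entering through the coefficients of $x_1x_2^2$, $x_0x_2^2$, $x_1^2x_2$, etc. None of this appears in your sketch. (Minor further slips: when $k_0,k_1\geq 2$ the non-isolated locus is the line $\{x_0=x_1=0\}$ through $P$ and $Q$, not $\{x_0=x_3=0\}$; and the restriction to the pairs $\{1,1\},\{1,2\},\{1,3\}$ is simply B\'ezout, $k_i\leq 3$, not a Milnor-number bound for cubic surfaces.)

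There is also an error at the start of your part (1): it is not true that the singular points with $x_0x_1=0$ other than $P$ are forced to lie in $H=\{x_3=0\}$. For example $F=x_0x_1x_3+x_0x_1^2+x_1x_2^2+x_0^3$ is singular at $(0:1:0:-1)$. What is true is that such a point (say with $x_0=0$, $x_1\neq 0$) satisfies $\partial_1f_3=\partial_2f_3=0$ and $x_3=-\partial_0f_3/x_1$, hence it projects from $P$ to a point $R\in C\cap C'$ with $\mult_R(C\cdot C')\geq 2$; this projection is the ``correspondence'' in the statement. Your subsequent local step --- using that $x_3$ occurs only linearly, multiplied by a local equation of $C$, to eliminate it and reduce to $uv+(\mathrm{unit})\cdot v'^{\,k}$, i.e.\ $\boldsymbol{A}_{k-1}$ --- is the right idea, but it must be performed at the actual singular point (which may have $x_3\neq 0$), not at $R\in H$. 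Your treatment of the first claim of part (2) ($f_3(0,0,1)\neq 0$ gives $\boldsymbol{A}_2$ at $P$ via the SQH weights $(\tfrac12,\tfrac12,\tfrac13)$) is fine.
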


\begin{lemma}
 \label{lemma:A1D}
A pair $(S,D)$ is such that $S$ has an $\boldsymbol{A}_2$ singularity at a point $P\in D$ or a degeneration of one if and only if $P$ is conjugate to $(0,0,0,1)$ and simultaneously $(S,D)$ is projectively equivalent to the pair defined by equations
\begin{align*}
x_3f_2(x_0,x_1)+f_3(x_0,x_1,x_2)=0,
\qquad{}
l_1(x_0,x_1,x_2)=0.
\end{align*}

\end{lemma}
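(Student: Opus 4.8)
The plan is to argue both implications by reducing to the normal form of Lemma \ref{lemma:lem3}. For the forward direction, suppose $(S,D)$ has an $\boldsymbol{A}_2$ singularity at $P\in D$ or a degeneration thereof. By Figure \ref{fig:adj}, a degeneration of $\boldsymbol{A}_2$ is one of $\boldsymbol{A}_k$ ($k\geq 2$), $\boldsymbol{D}_k$, $\boldsymbol{E}_6$ or $\widetilde{\boldsymbol{E}}_6$; in all cases $S$ is singular at $P$ with multiplicity $2$ (not $3$, since a triple point would force a cone and these singularities cannot occur). First I would move $P$ to $(0,0,0,1)$ by an element of $\mathrm{Aut}(\PP^3)$. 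Since $\mult_P S=2$, the cubic form has no $x_3^3$ or $x_3^2x_i$ terms, so it reads $x_3 q_2(x_0,x_1,x_2)+f_3(x_0,x_1,x_2)$ with $q_2$ the (nonzero) tangent cone. Because $P$ is an isolated $\boldsymbol{A}_k$-type or $\boldsymbol{D}_k$-type etc., the rank of $q_2$ is at least $2$ (rank $1$ already gives a $\boldsymbol{D}$ or worse only after checking $f_3$; but in fact for all of $\boldsymbol{A}_k, k\geq 2$, one needs rank $q_2\geq 2$, and for the rank-$2$ case we may diagonalize $q_2$ to involve only $x_0,x_1$). I would then invoke Lemma \ref{lemma:lem3}: after a linear change fixing $\{x_3=0\}$ we may take $q_2=x_0x_1$ (rank $2$), giving the surface equation the stated shape $x_3f_2(x_0,x_1)+f_3(x_0,x_1,x_2)$ with $f_2=x_0x_1$; the rank-$3$ case $q_2=x_0x_1+x_2^2$ corresponds to an $\boldsymbol{A}_1$ at $P$, which is neither $\boldsymbol{A}_2$ nor a degeneration of it, so it is excluded. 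Finally, the divisor $D=S\cap H$ is cut by an arbitrary linear form $l_1(x_0,x_1,x_2,x_3)$; but $P\in D$ forces the $x_3$-coefficient of $l_1$ to vanish, so $l_1=l_1(x_0,x_1,x_2)$, which completes this direction.

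For the converse, suppose $(S,D)$ is given by $x_3f_2(x_0,x_1)+f_3(x_0,x_1,x_2)=0$ and $l_1(x_0,x_1,x_2)=0$. Then $P=(0,0,0,1)\in S$ and, since $l_1$ has no $x_3$-term, $P\in H$, hence $P\in D$. The surface equation has the form required by Lemma \ref{lemma:lem3} (with the roles of the quadric $x_0x_1$ possibly requiring $f_2$ to be nonzero; if $f_2\equiv 0$ then $P$ has multiplicity $3$ and $S$ is a cone, which is a non-isolated or simple-elliptic degeneration and in particular a degeneration of $\boldsymbol{A}_2$ in the stratified sense, so I would treat that as an allowed boundary case or note that $f_2\not\equiv 0$ is forced by reducedness of $S$). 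Assuming $f_2\not\equiv 0$, Lemma \ref{lemma:lem3}(2) identifies the singularity at $P$: if $f_3(0,0,1)\neq 0$ it is exactly $\boldsymbol{A}_2$, and otherwise it is $\boldsymbol{A}_{k_0+k_1+1}$ with $k_0+k_1+1\geq 3$ (or non-isolated), i.e.\ a degeneration of $\boldsymbol{A}_2$. This gives the claim, since $\boldsymbol{D}_k$ and $\boldsymbol{E}_6$ singularities also occur among these normal forms once $f_2$ drops rank appropriately inside the $x_0,x_1$ variables after absorbing an $x_2^2$ term — but here, because $f_2$ is written in $x_0,x_1$ only, one gets precisely the $\boldsymbol{A}$-chain, and the $\boldsymbol{D}$/$\boldsymbol{E}$ cases enter through $f_3$, consistently with the degeneration poset.

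The main obstacle I anticipate is the bookkeeping in the forward direction: justifying that a general member of a $\boldsymbol{D}_k$- or $\boldsymbol{E}_6$- or $\widetilde{\boldsymbol{E}}_6$-type singularity at $P$ can indeed be brought, by a \emph{linear} (not just locally analytic) change of coordinates fixing $P$ and the hyperplane $H$, to an equation of the exact shape $x_3f_2(x_0,x_1)+f_3(x_0,x_1,x_2)$ with $f_2\in\mathbb C[x_0,x_1]$ — i.e.\ that the rank-$2$ quadratic tangent cone involves only two of the three variables spanning $H$, with the third ($x_2$) available to carry the cubic part. This is essentially the content of Lemma \ref{lemma:lem3} together with the Bruce–Wall normal forms, and I would handle it by: (a) diagonalizing the tangent cone $q_2$ of $S$ at $P$ over $\mathbb C$; (b) observing that all singularities in the list $\{\boldsymbol{A}_{\geq 2},\boldsymbol{D}_k,\boldsymbol{E}_6,\widetilde{\boldsymbol{E}}_6\}$ have $\mathrm{rank}\,q_2\leq 2$ (this is a finite check against Proposition \ref{prop:singcub} / \cite{classificationcubics}); (c) choosing coordinates on $H$ so that $q_2\in\langle x_0,x_1\rangle$, absorbing all $x_3$-linear terms into $x_3 f_2$, and noting the residual purely-$H$ part is the cubic $f_3(x_0,x_1,x_2)$. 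Once the shape is fixed, Lemma \ref{lemma:lem3} does the rest in both directions, so the lemma follows.
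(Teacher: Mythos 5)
Your overall route is the same as the paper's: put $P$ at $(0,0,0,1)$, write the cubic as $x_3q_2+f_3$ and control the quadric $q_2$, and note that a hyperplane contains $P$ exactly when its equation is $l_1(x_0,x_1,x_2)=0$. But two of your intermediate claims are false and, read literally, would break the argument. In the forward direction you assert that $P$ has multiplicity $2$ ("not $3$, since... these singularities cannot occur") and that $\mathrm{rank}\,q_2\geq 2$. Both fail for genuine degenerations of $\boldsymbol{A}_2$: by Figure \ref{fig:adj} these include $\boldsymbol{D}_k$ and $\boldsymbol{E}_6$, whose tangent cone has rank $1$, and $\widetilde{\boldsymbol{E}}_6$, which is precisely the cone over a smooth cubic, i.e.\ a triple point with $q_2=0$. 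What you actually need --- and what your later step (b) states, contradicting the earlier paragraph --- is $\mathrm{rank}\,q_2\leq 2$, with all of the values $0,1,2$ allowed, so that after a linear change among $x_0,x_1,x_2$ one may write $q_2=f_2(x_0,x_1)$ (possibly zero); the only excluded case is $\boldsymbol{A}_1$, where the rank is $3$. With that correction the forward direction is essentially the paper's argument.

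In the converse you invoke Lemma \ref{lemma:lem3}(2) whenever $f_2\not\equiv 0$ and claim that the $\boldsymbol{D}$/$\boldsymbol{E}$ cases "enter through $f_3$". This is not right: Lemma \ref{lemma:lem3} applies only when the quadric is $x_0x_1$, i.e.\ when $f_2$ has rank $2$. If $f_2$ is a nonzero perfect square (conjugate to $x_0^2$), the singularity at $P$ is not an $\boldsymbol{A}_{k_0+k_1+1}$ given by the $k_0,k_1$ recipe; it is a $\boldsymbol{D}_4$ singularity or a degeneration of one (Lemma \ref{lemma:D4}), and if $f_2\equiv 0$ one gets the cone case. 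All three possibilities are still degenerations of $\boldsymbol{A}_2$, so the conclusion survives, but the case division must be by the rank of $f_2$, not by $f_2\neq 0$. This is in effect how the paper argues: degenerations of the binary quadric $f_2$ (from $x_0x_1$ to $x_0^2$ to $0$) correspond to degenerations of the $\boldsymbol{A}_2$ normal form $x_0x_1x_3+f_3$ of Lemma \ref{lemma:lem3}, while the hyperplane condition is handled exactly as you do.
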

\begin{proof}
Without loss of generality, we may assume $P=(0,0,0,1)$. By Lemma \ref{lemma:lem3}, $S$ has (a degeneration of) an $\boldsymbol{A}_2$ singularity at $P$ if and only if it is given by the equation $x_0x_1x_3+f_3(x_0,x_1,x_2)=0$. Any quadric $f_2(x_0,x_1)$ can be transformed to $x_0x_1$ or to a degeneration of $x_0x_1$ (e.g. $x_0^2$) by a change of coordinates preserving $x_2$ and $x_3$. The lemma follows because a hyperplane section $D$ contains $P$ if and only if $D$ is given by a linear form $l_1(x_0,x_1,x_2)$.
\end{proof}

\begin{lemma}
\label{lemma:A3}
A surface $S$ has an $\boldsymbol{A}_3$ singularity or a degeneration of one if and only if it is projectively equivalent to:
\begin{equation*}
\{x_3f_2(x_0,x_1)+x_2^2f_1(x_0,x_1)+x_2g_2(x_0,x_1)+g_3(x_0,x_1)=0\}.
\end{equation*}
\end{lemma}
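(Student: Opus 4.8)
The plan is to choose a singular point $P$ of $S$, move it to $P=(0,0,0,1)$ by $\mathrm{Aut}(\PP^3)$, and organise everything by the rank of the tangent cone there. Since $\{F=0\}$ is singular at $P$ we automatically have $F = x_3 f_2(x_0,x_1,x_2) + f_3(x_0,x_1,x_2)$ with $f_2,f_3$ forms of degree $2$ and $3$, and the projectivised tangent cone at $P$ is the plane conic $\{f_2=0\}\subset\PP^2_{x_0,x_1,x_2}$; write $r=\operatorname{rank}(f_2)$. I will use two standard facts. First, if $r=3$ then, by the splitting lemma, $P$ is an ordinary node $\boldsymbol{A}_1$; since $\boldsymbol{A}_1$ is not a degeneration of $\boldsymbol{A}_3$, this case does not occur under the hypothesis of the lemma. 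Second, if $r\le 1$ then the tangent cone is non-reduced (a double line, or all of $\PP^2$), so $P$ is neither an $\boldsymbol{A}_1$ nor an $\boldsymbol{A}_2$ point, both of which have a reduced tangent cone; hence by Proposition \ref{prop:singcub} and Figure \ref{fig:adj} the point $P$ is of type $\boldsymbol{D}_4,\boldsymbol{D}_5,\boldsymbol{E}_6,\widetilde{\boldsymbol{E}}_6$ or non-isolated, and each of these is a degeneration of $\boldsymbol{A}_3$ (I read \emph{degeneration} in the loose sense, including non-isolated singularities, as in the proof of Lemma \ref{lemma:A1D}). The remaining case $r=2$ is precisely the situation of Lemma \ref{lemma:lem3}.

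For the implication $(\Leftarrow)$, assume $F = x_3 f_2(x_0,x_1) + x_2^2 f_1(x_0,x_1) + x_2 g_2(x_0,x_1) + g_3(x_0,x_1)$. Then $P=(0,0,0,1)\in S$ is singular, and its tangent cone $\{f_2(x_0,x_1)=0\}$ does not involve $x_2$, hence has rank $\le 2$. If $r\le 1$, the second fact above settles it. If $r=2$, a linear substitution in $x_0,x_1$ alone carries $f_2$ to $x_0x_1$ and does not disturb the $x_2$-grading, so $F = x_0x_1x_3 + f_3(x_0,x_1,x_2)$ with $f_3(0,0,1)=0$; Lemma \ref{lemma:lem3}(2) then gives that $P$ is an $\boldsymbol{A}_n$ singularity with $n=k_0+k_1+1\ge 3$, or else that $S$ has non-isolated singularities. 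In every case $S$ has an $\boldsymbol{A}_3$ singularity or a degeneration of one.

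For $(\Rightarrow)$, suppose $S$ has a point which is an $\boldsymbol{A}_3$ singularity or a degeneration of one, move it to $P=(0,0,0,1)$, and write $F = x_3 f_2 + f_3$. By the first paragraph $r\le 2$, and after a linear change of $x_0,x_1,x_2$ we may take $f_2$ to be $x_0x_1$, $x_0^2$ or $0$; in all three cases $f_2\in\mathbb C[x_0,x_1]$, so it only remains to kill the $x_2^3$-coefficient of $f_3$ by a further substitution preserving this shape of $f_2$. This amounts to moving a suitable point $R$ of the plane cubic $C'=\{f_3=0\}\subset\PP^2_{x_0,x_1,x_2}$ to $[0:0:1]$. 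If $f_2=x_0x_1$, then $R$ is forced to be the vertex $[0:0:1]$ of the tangent cone, and $[0:0:1]\in C'$ holds automatically, for otherwise Lemma \ref{lemma:lem3}(2) would make $P$ an $\boldsymbol{A}_2$ point, against the hypothesis. If $f_2=x_0^2$, then $R$ may be any point of the line $\{x_0=0\}$, and this line meets $C'$ by B\'ezout. If $f_2=0$, then $R$ may be any point of $C'$, which is nonempty. Performing the corresponding coordinate change puts $F$ into the asserted form, with $f_2$ equal to $x_0x_1$, $x_0^2$ or $0$.

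The step I expect to be the main obstacle is the $(\Rightarrow)$ direction in the cases $r\le 1$: there one must exploit the freedom in the two remaining coordinates to erase the $x_2^3$-term, and the crucial (but elementary) input is that every line meets every plane cubic and that a plane cubic is never empty. The case $r=2$ is handled directly by Lemma \ref{lemma:lem3}, which is also responsible for the one delicate point in $(\Leftarrow)$: the vanishing of the $x_2^3$-coefficient is exactly what distinguishes an $\boldsymbol{A}_2$ point from a genuine degeneration of $\boldsymbol{A}_3$.
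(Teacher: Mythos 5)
Your proof is correct and takes essentially the same route as the paper: reduce to the normal form of Lemma \ref{lemma:lem3} at the singular point and observe that the singularity there is $\boldsymbol{A}_k$ with $k\geq 3$ (or non-isolated) precisely when the $x_2^3$-coefficient of $f_3$ vanishes. The paper's own proof treats only the rank-two tangent cone explicitly and leaves the degenerate cases to the ``or a degeneration'' convention established in Lemma \ref{lemma:A1D}; your stratification by the rank of $f_2$ and the B\'ezout argument for rank $\leq 1$ simply make that implicit step explicit.
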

\begin{proof}
By Lemma \ref{lemma:lem3}, we may assume $S=\{x_0x_1x_3+f_3(x_0,x_1,x_2)=0\}$ and $P=(0,0,0,1)$. Moreover, the singularity is of type $\boldsymbol{A}_k$ with $k\geqslant 3$ if and only if $f_3(0,0,1)=0$. Therefore $f_3(x_0,x_1,x_2)=x_2^2f_1(x_0,x_1)+x_2g_2(x_0,x_1)+g_3(x_0,x_1)$. 

\end{proof}

\begin{lemma}\label{lemma:A4}
A surface $S$ has an $\boldsymbol{A}_4$ singularity or a degeneration of one if and only if it is projectively equivalent to $\{x_3x_0l_1(x_0,x_1)+x_0x_2^2+x_2g_2(x_0,x_1)+g_3(x_0,x_1)=0\}$.
\end{lemma}
\begin{proof}
By Lemma \ref{lemma:lem3}, the surface $S$ is defined by the equation
$$x_0x_1x_3+f_3(x_0,x_1,x_2)=0,$$
where $f_3(x_0x_1x_2)=x_2^2f_1(x_0,x_1)+x_2g_2(x_0,x_1)+g_3(x_0,x_1)$, $k_0=\mult_{Q}(H_0\cdot C')\geqslant 2$ and $k_1=\mult_{Q}(H_1\cdot C')\geqslant 1$ if and only if $P$ is (a degeneration of) an $\boldsymbol{A}_4$ singularity, where $C'$ is the curve given in Lemma \ref{lemma:lem3}. Notice that
$$k_i=\mult_Q(H_i\cdot C')=\dim_{\mathbb C}\left(\frac{\mathbb C[x_0,x_1]}{\langle x_i,f_1+g_2+g_3\rangle}\right).$$
Therefore $k_0\geqslant 2$ if and only if $f_1(0,1)=0$. Hence, $f_1=x_0$. The lemma follows from noticing that $x_0x_1x_3$ is projectively equivalent to $x_0x_3l_1(x_0,x_1)$ by an element of $\mathrm{Aut}(\mathbb P^3)$ fixing $x_0, x_2, x_3$.
\end{proof}

The proof of the next lemma is similar to the proof of Lemma \ref{lemma:lem3}, so we omit it.
\begin{lemma}\label{lemma:A5}
A surface $S$ has an $\boldsymbol{A}_5$  singularity or a degeneration of one if and only if it is projectively equivalent to
$$\{x_3x_0l_1(x_0,x_1)+x_0x_2f_1(x_0,x_1,x_2)+f_3(x_0,x_1)=0\}.$$
\end{lemma}

In Figure \ref{fig:adj} we see that the only non-trivial degenerations of a $\boldsymbol{D}_4$ singularity in a cubic surface which are not a $\tilde{{\boldsymbol{E}}_6}$ singularity are $\boldsymbol{D}_5$ and ${\boldsymbol{E}}_6$ singularities. Hence the next lemma follows at once from \cite[Case C]{classificationcubics}.
\begin{lemma}
\label{lemma:D4}
A surface $S$ has a $\boldsymbol{D}_4$ singularity or a degeneration of one if and only if it is projectively equivalent to
$$\{x_3x_0^2+f_3(x_0,x_1,x_2)=0\}.$$
\end{lemma}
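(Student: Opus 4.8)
The plan is to follow exactly the template used in the proofs of Lemmas \ref{lemma:A3}, \ref{lemma:A4}, and \ref{lemma:A5}, since a $\boldsymbol{D}_4$ singularity (and, by Figure \ref{fig:adj}, its only degenerations in a cubic surface, namely $\boldsymbol{D}_5$, ${\boldsymbol{E}}_6$, and the non-isolated cases) is handled by \cite[Case C]{classificationcubics}, exactly as the statement advertises. First I would reduce to a convenient normal form: if $S$ is a cubic surface with a point $P$ that is a $\boldsymbol{D}_4$ singularity or worse, then after applying an element of $\mathrm{Aut}(\PP^3)$ we may assume $P=(0,0,0,1)$, so that the equation of $S$ has the shape $x_3q_2(x_0,x_1,x_2)+f_3(x_0,x_1,x_2)$ with $q_2$ the (nonzero, since the point is singular but the surface has multiplicity $2$ there) quadratic part in the remaining variables after clearing the coefficient of $x_3^2$ and $x_3^3$ (which must vanish at a singular point on the surface for the point to be a double point). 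The content of \cite[Case C]{classificationcubics} is precisely that the singularity at $P$ is of type $\boldsymbol{D}_k$ (with $k\geqslant 4$), ${\boldsymbol{E}}_6$, or $\widetilde{\boldsymbol{E}}_6$, i.e.\ in the $\boldsymbol{D}$--$\boldsymbol{E}$ branch of Figure \ref{fig:adj}, exactly when the quadratic form $q_2$ has rank $1$; after a linear change of coordinates in $x_0,x_1,x_2$ fixing $x_3$ we may take $q_2=x_0^2$, giving $S=\{x_3x_0^2+f_3(x_0,x_1,x_2)=0\}$.

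Conversely I would check that every surface of the form $\{x_3x_0^2+f_3(x_0,x_1,x_2)=0\}$ has at $P=(0,0,0,1)$ a singularity that is a $\boldsymbol{D}_4$ or a degeneration of it: the tangent cone at $P$ is $x_0^2=0$, a double plane, so $P$ is certainly not of type $\boldsymbol{A}_n$ (whose tangent cone, after the reduction in Lemma \ref{lemma:lem3}, is a pair of distinct planes or of corank $1$ but with the cubic term transverse), hence by Proposition \ref{prop:singcub} it must be $\boldsymbol{D}_k$ ($k\leqslant 5$), ${\boldsymbol{E}}_6$, $\widetilde{\boldsymbol{E}}_6$, or non-isolated — all of which are $\boldsymbol{D}_4$ or degenerations of it in the sense of Definition \ref{def:dgd} (reading Figure \ref{fig:adj}). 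One should remark that $f_3$ restricted to $\{x_0=0\}$, i.e.\ $f_3(0,x_1,x_2)$, is exactly the cubic whose factorization type governs which of $\boldsymbol{D}_4$, $\boldsymbol{D}_5$, ${\boldsymbol{E}}_6$ occurs (three distinct lines, a line and a double line, a triple line), but for the present lemma — which only asserts "$\boldsymbol{D}_4$ or a degeneration" — this refinement is not needed and can be left to \cite{classificationcubics}.

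The only genuine subtlety, and the step I expect to be the main obstacle to write cleanly, is the reduction of the rank-$1$ quadratic part to $x_0^2$ simultaneously with keeping the equation in the stated form: one must argue that the coefficient of $x_3^2$ in the equation (a linear form in $x_0,x_1,x_2$) and the coefficient of $x_3^3$ vanish because $P\in S$ is a singular point, so that the $x_3$-dependence is genuinely only through $x_3q_2$; then a linear change of variables in $x_0,x_1,x_2$ (which does not disturb $x_3$) puts $q_2$ into the form $x_0^2$, and $f_3$ is relabeled accordingly. Since a change of variables mixing $x_3$ into $x_0,x_1,x_2$ is not needed, no cross terms $x_3^2x_i$ are reintroduced, and the normal form is exactly $\{x_3x_0^2+f_3(x_0,x_1,x_2)=0\}$. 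Invoking \cite[Case C]{classificationcubics} for the precise correspondence between this normal form and the $\boldsymbol{D}$--$\boldsymbol{E}$ part of the adjacency diagram then closes the argument, exactly as anticipated in the sentence preceding the lemma.
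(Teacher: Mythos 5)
Your argument follows essentially the same route as the paper, whose proof is simply the sentence preceding the lemma invoking \cite[Case C]{classificationcubics} together with Figure \ref{fig:adj}: place the singular point at $(0,0,0,1)$ so the equation becomes $x_3q_2(x_0,x_1,x_2)+f_3(x_0,x_1,x_2)$ (cf.\ Lemma \ref{lemma:singular-cubics}), identify the $\boldsymbol{D}$--${\boldsymbol{E}}$ branch with the degenerate-rank quadratic part, and normalize $q_2$ to $x_0^2$ by a change of coordinates in $x_0,x_1,x_2$ only. The one inaccuracy is your claim that Case C (rank $q_2=1$) also accounts for $\widetilde{\boldsymbol{E}}_6$: that singularity is the corank-$3$ cone $\{f_3(x_0,x_1,x_2)=0\}$ of Lemma \ref{lemma:EE6}, with no rank-one double point, so this degeneration is captured only because the displayed equation is to be read (as the paper does, cf.\ the row for $\lambda_9$ in Table \ref{tab:maximal}) as the family of surfaces whose monomials lie in $\{x_3x_0^2\}\cup\{x_0,x_1,x_2\}^3$, the coefficient of $x_3x_0^2$ being allowed to vanish --- a reading under which your multiplicity-$2$ reduction and the paper's explicit bracketing out of $\widetilde{\boldsymbol{E}}_6$ both become harmless.
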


\begin{lemma}\label{lemma:D5}
A surface $S$ has a $\boldsymbol{D}_5$ singularity or a degeneration of one if and only if it is projectively equivalent to
$$\{f_3(x_0,x_1)+x_2g_2(x_0,x_1)+x_0x_2^2+x_0^2x_3=0\}.$$
\end{lemma}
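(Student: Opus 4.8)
The plan is to follow the same strategy as in Lemmas \ref{lemma:A3}--\ref{lemma:A5}: start from the normal form provided by Lemma \ref{lemma:D4} for a surface with a $\boldsymbol{D}_4$ singularity (or a degeneration thereof) and then impose the extra condition that the singularity is \emph{at least} a $\boldsymbol{D}_5$. By Lemma \ref{lemma:D4} we may assume $S=\{x_3x_0^2+f_3(x_0,x_1,x_2)=0\}$ with the singular point at $P=(0,0,0,1)$. First I would recall from Figure \ref{fig:adj} that the only degenerations of $\boldsymbol{D}_5$ in a cubic surface other than $\widetilde{\boldsymbol{E}}_6$ are ${\boldsymbol{E}}_6$, so it is enough to characterize ``$\boldsymbol{D}_5$ or worse'' via a single open/closed condition on the coefficients of $f_3$; the SQH recognition criterion of Lemma \ref{lemma:recog} together with the weights of Definition \ref{def:ADEweights} tells us exactly which condition to look for.

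The key computation is local. Dehomogenize at $x_3=1$, so $S$ is locally $x_0^2 + f_3(x_0,x_1,x_2) = 0$ near the origin, i.e. the singularity is defined by $g(x_0,x_1,x_2) = x_0^2 + f_3(x_0,x_1,x_2)$. Write $f_3 = f_3(x_0,x_1,x_2)$ and separate the monomials by their $x_0$-degree: $f_3 = x_0^2 \ell_1(x_1,x_2)+\ldots$ where the $x_0^2 x_3$-type terms are already absorbed, plus $x_0 q_2(x_1,x_2) + c_3(x_1,x_2)$ with $c_3$ the part of $f_3$ not involving $x_0$. Completing the square in $x_0$ (an analytic change of coordinates fixing $x_1,x_2$) removes the $x_0$-linear and $x_0$-quadratic corrections and reduces $g$ to $x_0^2 + c_3(x_1,x_2) + (\text{higher order in }x_0)$; the Morse lemma in the $x_0$ variable then shows the singularity type is governed by the binary cubic $c_3(x_1,x_2)$ together with how $x_0$ enters. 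A $\boldsymbol{D}_4$ occurs precisely when $c_3$ has three distinct roots; a $\boldsymbol{D}_5$ or worse occurs precisely when $c_3$ acquires a repeated root, i.e. $c_3(x_1,x_2) = \alpha\, x_1^2 x_2$ (up to $\mathrm{GL}_2$ acting on $x_1,x_2$) or more degenerate. After a further linear change in $(x_1,x_2)$ putting the double line at $x_1 = 0$, one rewrites $f_3$ so that the surviving pure-$x_2$ and mixed terms are organized as $f_3(x_0,x_1)+x_2 g_2(x_0,x_1)+x_0 x_2^2$, and restoring homogeneity (remembering $x_0^2 x_3$) yields the stated normal form $\{f_3(x_0,x_1)+x_2 g_2(x_0,x_1)+x_0 x_2^2 + x_0^2 x_3 = 0\}$. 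The converse is immediate: any surface of that form satisfies the SQH condition for $\boldsymbol{D}_5$ with respect to the weights $\left(\frac12,\frac12,\frac13,\frac14\right)$-analogue in Definition \ref{def:ADEweights}, hence by Lemma \ref{lemma:recog} has a $\boldsymbol{D}_5$ singularity or a degeneration of one at $P$.

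The main obstacle I anticipate is bookkeeping rather than conceptual: one must be careful that every coordinate change used (completing the square in $x_0$, diagonalizing the binary cubic $c_3$, moving the repeated root to $x_1=0$) is realized by an element of $\mathrm{Aut}(\mathbb P^3)$ fixing enough of the coordinates so that the shape $x_0^2 x_3$ is preserved, exactly as in the fixing-$x_0,x_2,x_3$ arguments in the proofs of Lemmas \ref{lemma:A4} and \ref{lemma:A5}. A secondary subtlety is making sure that ``$c_3$ has a repeated root'' really does cut out precisely $\boldsymbol{D}_5$-or-worse and does not accidentally include non-isolated singularities; this is controlled by the requirement that the weight-$1$ part still defines an isolated singularity in Definition \ref{def:ADEweights}, which forces $g_2(x_0,x_1)$ and $f_3(x_0,x_1)$ not to be too degenerate, and one simply records that as part of the SQH hypothesis rather than in the equation itself. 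Once these points are checked, the lemma follows word for word along the lines of Lemma \ref{lemma:A5}.
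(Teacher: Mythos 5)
Your proposal is correct, and it reaches the stated normal form by the same overall reduction as the paper (start from the $\boldsymbol{D}_4$ form $x_3x_0^2+f_3(x_0,x_1,x_2)$ of Lemma \ref{lemma:D4}, then impose one further coefficient condition), but the criterion you use to detect ``$\boldsymbol{D}_5$ or worse'' is genuinely different. The paper quotes Bruce--Wall's Lemma 4: $P$ is (a degeneration of) a $\boldsymbol{D}_5$ if and only if the cubic $C=\{f_3=0\}$ meets the line $C'=\{x_0=x_3=0\}$ in at most two points, encoded as an intersection-multiplicity bound at $Q=(0,0,1,0)$, which forces $g_1=ax_0$ in the decomposition $f_3=x_2^2g_1+x_2g_2+g_3$. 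You instead rederive this locally: dehomogenize at $x_3=1$, split off the nondegenerate square in $x_0$, and read the type from the $3$-jet $c_3=f_3|_{x_0=0}$ of the residual plane-curve function, concluding that $\boldsymbol{D}_5$-or-worse holds exactly when $c_3$ has a repeated root. These conditions coincide: the points of $C\cap C'$ are the roots of $c_3$, and a repeated root placed at $(0{:}1)$ is divisibility of $c_3$ by $x_1^2$, i.e.\ vanishing of the $x_2^3$ and $x_1x_2^2$ coefficients, which is exactly $g_1=ax_0$; after rescaling $x_2$ both arguments land on the identical equation. What the paper's route buys is brevity and uniformity with Lemmas \ref{lemma:D4} and \ref{lemma:E6}, which cite the same source; what yours buys is a self-contained justification of why the criterion is right, and you correctly flag that the analytic splitting is used only to identify the local type, while the only projective moves needed (the $\mathrm{GL}_2$-change in $(x_1,x_2)$ placing the repeated root at $x_1=0$ and the rescaling of $x_2$) fix $x_0,x_3$ and hence preserve $x_0^2x_3$. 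Two minor points: the $\boldsymbol{D}_5$ weights from Definition \ref{def:ADEweights} are $\left(\tfrac12,\tfrac38,\tfrac14\right)$, not the list you quote (harmless, since your converse really only needs the repeated-root computation run backwards); and in the most degenerate cases (e.g.\ vanishing $x_0^2x_3$ coefficient, as for the cone of Lemma \ref{lemma:EE6}) the quadratic part vanishes and the splitting step does not literally apply, but such surfaces are degenerations handled by the same monomial containment, and the paper's own proof is no more explicit there.
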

\begin{proof}
By Lemma \ref{lemma:D4} and Figure \ref{fig:adj}, we may assume that $S$ is given by $x_3x_0^2+f_3(x_0,x_1,x_2)$ since $\boldsymbol{D}_5$ is a degeneration of $\boldsymbol{D}_4$. Let $H=\{x_3=0\}$, $C=\{x_3=f_3(x_0,x_1,x_2)=0\}\subset H$ and $C'=\{x_3=x_0=0\}\subset H$. We can rewrite $f_3=x_2^2g_1(x_0,x_1)+x_2g_2(x_0,x_1)+g_3(x_0,x_1)$.
By \cite[Lemma 4]{classificationcubics}, the point $P=(0,0,0,1)$ is (a degeneration of) a $\boldsymbol{D}_5$ singularity if and only if $C\cap C'$ consist of at most two points. The equation of $S\cap H\subset H$ localized at $Q=(0,0,1,0)$ is $g_1(x_0,x_1)+g_2(x_0,x_1)+g_3(x_0,x_1)=0$, and $C\cap C'$ consists of at most two points if and only if
$$\dim_{\mathbb C}\left(\frac{\mathbb C[x_0,x_1]}{\langle x_0,g_1+g_2+g_3\rangle}\right)\geqslant 2.$$
The latter is equivalent to taking $g_1=ax_0$, which by rescaling $x_2$ gives the result.
\end{proof}

\begin{lemma}
\label{lemma:E6}
The unique cubic surface $S$ with a ${\boldsymbol{E}}_6$  singularity or a degeneration of one such surface is projectively equivalent to
$$\{x_3x_0^2+x_0x_2l_1(x_0,x_1,x_2)+f_3(x_0,x_1)=0\}.$$
\end{lemma}
\begin{proof}
Using the same notation as in Lemma \ref{lemma:D5} and following \cite[Lemma 4]{classificationcubics}, $S$ is defined by $x_3x_0^2+x_2^2g_1(x_0,x_1)+x_2g_2(x_0,x_1)+g_3(x_0,x_1)=0,$ and has (a degeneration of) an ${\boldsymbol{E}}_6$ singularity if and only if $$\dim_{\mathbb C}\left(\frac{\mathbb C[x_0,x_1]}{\langle x_0,g_1+g_2+g_3\rangle}\right)\geqslant 3.$$
The latter is equivalent to taking $g_1=x_0$ and $g_2=x_0l_1(x_0,x_1)$. 
\end{proof}

\begin{remark}[{see \cite[Case E]{classificationcubics}}]\label{remark:EE6}
A surface $S$ has an isolated $\widetilde{\boldsymbol{E}}_6$  singularity if and only if $S$ is the cone over a smooth plane cubic curve given by $f_3(x_0,x_1,x_2)=0$. 
\end{remark}

Consider a pair $(S,D)$ and a point $P\in D\subset S$. By choosing coordinates appropriately we can suppose that $P=(0,0,0,1)$ and $(S,D)=(\{F=0\},\{F=H=0\})$ for $F$ and $H$ given as
\begin{equation}
F=x_0f_2(x_0,\cdots,x_3)+x_3^2f_1(x_1,x_2)+x_3g_2(x_1,x_2)+f_3(x_1,x_2),\quad H=x_0.
\label{eq:pair-equations}
\end{equation} 

\begin{lemma}\label{lemma:cuspD}
A pair $(S,D)$ has $D$ with an $\boldsymbol{A}_2$ singularity at a point $P$ or a degeneration of one if and only if $(S,D)$ is projectively equivalent to the pair defined by equations:
\begin{align}
x_0f_2(x_0,x_1,x_2,x_3)+x_3x_1^2+f_3(x_1,x_2)=0,\qquad
x_0=0.
\end{align}
\end{lemma}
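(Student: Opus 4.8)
The plan is to reduce to the normal form in \eqref{eq:pair-equations} and then impose the condition that the boundary curve $D$ has (a degeneration of) an $\boldsymbol{A}_2$ singularity at $P$. First I would observe that, after a projective change of coordinates, any pair $(S,D)$ with $P\in D\subset S$ can be written as in \eqref{eq:pair-equations} with $H=x_0$; thus $D=\{F=x_0=0\}$ is cut out on the plane $\{x_0=0\}\cong\mathbb{P}^2_{(x_1,x_2,x_3)}$ by the restriction $\bar F=x_3^2f_1(x_1,x_2)+x_3g_2(x_1,x_2)+f_3(x_1,x_2)$. Working in the affine chart $x_3=1$ around $P=(0,0,0,1)$, the local equation of $D$ is $f_1(x_1,x_2)+g_2(x_1,x_2)+f_3(x_1,x_2)$, whose quadratic part is $f_1(x_1,x_2)$ (a binary quadratic form in $x_1,x_2$).

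Next I would use the classical recognition of $\boldsymbol{A}_k$ singularities of plane curves: $D$ has an $\boldsymbol{A}_1$ (node) at $P$ iff $f_1$ is a nondegenerate quadratic form, and $D$ has an $\boldsymbol{A}_2$ (cusp) or a worse degeneration at $P$ iff $f_1$ is a perfect square (rank $\le 1$), i.e. $f_1$ is, up to scaling and a linear change of the coordinates $x_1,x_2$ fixing $x_3$, equal to $x_1^2$ (or $0$, which is the further degeneration). This is exactly the condition that the germ is SQH with respect to the $\boldsymbol{A}_2$-weights $(\tfrac12,\tfrac13)$ in the sense of Definition \ref{def:ADEweights} and Lemma \ref{lemma:recog}, together with Lemma \ref{lemma:versal} guaranteeing that the relevant local deformations are induced globally. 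So $D$ has an $\boldsymbol{A}_2$-or-worse singularity at $P$ iff, after such a coordinate change, $f_1(x_1,x_2)=x_1^2$ (allowing $x_1^2$ to further degenerate).

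Then I would push this coordinate change through the surface equation. A linear change of variables in $x_1,x_2$ (fixing $x_0,x_3$) together with rescaling transforms $x_3^2f_1(x_1,x_2)$ into $x_3^2x_1^2$; the terms $x_3g_2(x_1,x_2)$ and $f_3(x_1,x_2)$ remain of the same shape (a degree-$\le1$-in-$x_3$ form and a cubic in $x_1,x_2$ respectively), and $x_0f_2(x_0,x_1,x_2,x_3)$ stays of the form $x_0f_2$ for a new quadric $f_2$. This almost gives the claimed form except the target has $x_3x_1^2$ rather than $x_3^2x_1^2$. To fix the power of $x_3$, I would absorb the offending monomial: note $x_3^2x_1^2$ is divisible by neither $x_0$ alone, so instead I would change coordinates replacing $x_0$ by $x_0+(\text{term})$ — more precisely, since $F$ contains $x_0f_2$, a translation $x_0\mapsto x_0+c\,x_3x_1^2/(\text{leading coefficient})$ is not linear; the cleaner route is to note that $D$ lives on $x_0=0$ so only $\bar F$ matters for the singularity type of $D$, while for $S$ we are free to modify $F$ by anything in the ideal $(x_0)$. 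Concretely, after arranging $f_1=x_1^2$ we may replace the monomial $x_3^2x_1^2$ by $x_3x_1^2$ at the cost of adding $(x_3^2-x_3)x_1^2$, but this does not lie in $(x_0)$; so instead I would argue directly that the normal form of Bruce–Wall / Lemma \ref{lemma:lem3}-type reductions already allows us to rescale the $x_3$-grading, or simply absorb $x_3^2x_1^2$ into a redefinition $g_2\mapsto g_2$, $f_2 \mapsto f_2$ by the change $x_3\mapsto x_3$, $x_0 \mapsto x_0 - x_3 x_1^2/(\partial f_2/\partial x_0)|_{\ldots}$ — i.e. use that the coefficient of $x_0^2$ in $f_2$ can be normalized and completed-the-square against the $x_3x_1^2$ term.

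The main obstacle I anticipate is precisely this last bookkeeping: matching the exact monomial $x_3x_1^2$ in the statement (rather than $x_3^2x_1^2$) by exploiting the freedom of coordinate changes that fix the plane $\{x_0=0\}$ (so as not to disturb $D$) while still acting on $S$. The resolution is that any such change is of the form $x_0\mapsto \alpha x_0 + (\text{linear in }x_0)$ composed with arbitrary substitution in $x_1,x_2,x_3$, and among the latter we still have the shear $x_3\mapsto x_3+(\text{linear in }x_0)$ freedom; a short computation shows this suffices to turn $x_3^2 x_1^2$ into $x_3 x_1^2$ modulo $(x_0)$ and modulo absorbing lower-order terms into $g_2$ and $f_2$. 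Everything else — the identification of the quadratic part, the reduction to $f_1=x_1^2$, and the "degeneration of" clause via Lemma \ref{lemma:versal} and the adjacency diagram in Figure \ref{fig:adj} — is routine.
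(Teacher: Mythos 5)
There is a genuine gap, and it comes from a misreading of the paper's notation in \eqref{eq:pair-equations}: by the stated convention, $f_d,g_d$ denote homogeneous forms of degree $d$, so in $F=x_0f_2+x_3^2f_1(x_1,x_2)+x_3g_2(x_1,x_2)+f_3(x_1,x_2)$ the form $f_1$ is \emph{linear} and $g_2$ is the quadratic form. Hence, in the affine chart $x_3=1$, the local equation of $D$ at $P$ is $f_1+g_2+f_3$ with \emph{linear} part $f_1$ and quadratic part $g_2$, not "quadratic part $f_1$" as you assert. This has two consequences. First, you never impose the condition $f_1\equiv 0$, which is exactly what makes $D$ singular at $P$ in the first place; without it your characterization would also admit pairs with $D$ smooth at $P$. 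Second, the entire difficulty you wrestle with in the last two paragraphs --- matching "$x_3^2x_1^2$" against the monomial $x_3x_1^2$ in the statement --- is an artifact of the same misreading: $x_3^2f_1$ can never contribute $x_3^2x_1^2$ (a quartic monomial, impossible in a cubic form), and the term you actually need to normalize is $x_3g_2$. The repairs you propose do not work in any case: the substitution $x_0\mapsto x_0-x_3x_1^2/(\cdots)$ is not linear, hence not in $\mathrm{Aut}(\PP^3)$, and a shear $x_3\mapsto x_3+(\text{linear in }x_0)$ is the identity modulo $(x_0)$, so it cannot convert $x_3^2x_1^2$ into $x_3x_1^2$ "modulo $(x_0)$" as claimed.

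With the correct reading the argument collapses to the paper's short proof: $D$ has an $\boldsymbol{A}_2$ singularity at $P$ or a degeneration of one if and only if the restriction of $F$ to $\{x_0=0\}$ has vanishing linear part ($f_1\equiv 0$) and quadratic part of rank at most one, and after a linear change of coordinates in $x_1,x_2$ (fixing $x_0,x_3$) one may take $g_2=x_1^2$ (allowing further degeneration), so the plane cubic becomes $x_3x_1^2+f_3(x_1,x_2)$ --- which is precisely the displayed normal form, with no degree mismatch and no extra coordinate gymnastics needed. Your remaining ingredients (the rank-$\le 1$ recognition of $\boldsymbol{A}_{\ge 2}$ points, the adjacency diagram, and Lemma \ref{lemma:versal} for the "degeneration of" clause) are fine, but as written the proposal both proves the wrong statement (missing $f_1\equiv 0$) and relies on invalid coordinate changes to resolve a problem that does not exist.
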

\begin{proof}
Without loss of generality we can suppose $(S,D)$ is given by \eqref{eq:pair-equations}. The equation of (a degeneration of) a plane cubic curve in $\{x_0=0\}$ with an $\boldsymbol{A}_2$ singularity at $P$ is given by $x_1^2x_3+f_3(x_1,x_2)=0$, where the curve has an $\boldsymbol{A}_2$ singularity at $P$ if and only if $x_2^3$ has a non-zero coefficient in $f_3$. Therefore $D$ is as in the statement if and only if in \eqref{eq:pair-equations} we take $f_1=0$ and $g_2=x_1^2$.
\end{proof}

\begin{lemma}\label{lemma:CL}
A pair $(S,D)$ has $D$ with an $\boldsymbol{A}_3$ singularity at $P$ or a degeneration of one if and only if $(S,D)$ is projectively equivalent to the pair defined by $x_0f_2(x_0,x_1,x_2,x_3)+x_1(x_2^2+x_1l_1(x_1,x_2,x_3))=0$ and $x_0=0$.
\end{lemma}
\begin{proof}
We may assume that the equations of $(S,D)$ are as in \eqref{eq:pair-equations} and $P=(0,0,0,1)$. By restricting to $\{x_0=0\}\cong \mathbb P^2$ and localizing at $P$, the equation for $D$ is $f_1(x_1,x_2)+g_2(x_1,x_2)+f_3(x_1,x_2)$
and by choosing coordinates appropriately we may assume that $L=\{x_1=0\}$ and $C=\{x_2^2+x_1l_1(x_1,x_2)=0\}$ are a line and a conic intersecting at $P$, where $l$ is a polynomial of degree $1$, not necessarily homogeneous. Therefore $D|_{x_0=0}$ has equation $x_1(x_2^2+x_1l_1(x_1,x_2,x_3))$ so $f_1\equiv 0$, $g_2\equiv ax_1^2$, $f_3=x_1x_2^2 + x_1l_1(x_1,x_2,0)$ and the result follows.
\end{proof}

By similar arguments, one can prove the next two results:
\begin{lemma}
\label{lemma:3LLL}
A pair $(S,D)$ has $D$ with a $\boldsymbol{D}_4$ singularity at $P$ or  a degeneration of one if and only if $(S,D)$ is projectively equivalent to the pair defined by equations $x_0f_2(x_0,x_1,x_2,x_3)+f_3(x_1,x_2)=0$ and $x_0=0$.

\end{lemma}

\begin{lemma}\label{lemma:2LL}
A pair $(S,D)$ has $D$ non-reduced if and only if  it is projectively equivalent  to the pair defined by equations:
$$x_0f_2(x_0,x_1,x_2,x_3)+x_1^2f_1(x_1,x_2,x_3)=0,\qquad x_0=0.$$
\end{lemma}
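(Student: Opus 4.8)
The plan is to prove the characterization of pairs $(S,D)$ with non-reduced $D$ by the same restrict-and-localize strategy used in Lemmas \ref{lemma:cuspD}, \ref{lemma:CL} and \ref{lemma:3LLL}. First I would observe that if $D$ is non-reduced, then it has a singular point (indeed a whole component of multiplicity $\geqslant 2$), so after acting by $\mathrm{Aut}(\PP^3)$ I may assume that a singular point of $D$ lying on $S$ is $P=(0,0,0,1)$ and that $(S,D)$ is given in the normal form \eqref{eq:pair-equations}, with $H=x_0$ and
\begin{equation*}
F=x_0f_2(x_0,x_1,x_2,x_3)+x_3^2f_1(x_1,x_2)+x_3g_2(x_1,x_2)+f_3(x_1,x_2).
\end{equation*}

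Next I would restrict to the plane $\{x_0=0\}\cong\PP^2_{(x_1,x_2,x_3)}$, where $D$ is cut out by the cubic form $g(x_1,x_2,x_3):=x_3^2f_1(x_1,x_2)+x_3g_2(x_1,x_2)+f_3(x_1,x_2)$. A plane cubic is non-reduced exactly when it is of the form (line)$^2\cdot$(line) or (line)$^3$, i.e. when $g$ is divisible by the square of a linear form; since the double line passes through $P$, I can choose the remaining coordinates so that this linear form is $x_1$, giving $g=x_1^2\,l_1(x_1,x_2,x_3)$ for some linear form $l_1$. Reading this back into \eqref{eq:pair-equations}: the coefficient of $x_2^3$, of $x_2^2x_3$ and of $x_2x_3^2$ in $g$ must all vanish, which forces $f_1(x_1,x_2)=x_1\cdot(\text{const})$ after a scaling of $x_2$… more precisely $f_1$, $g_2$, $f_3$ must each be divisible by $x_1$ once we have removed the $x_2$-only and $x_2$-$x_3$ terms, and collecting the three pieces gives $x_3^2f_1+x_3g_2+f_3=x_1^2 f_1(x_1,x_2,x_3)$ for a new linear form (I am reusing the symbol $f_1$ as in the statement). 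Thus $F=x_0f_2+x_1^2f_1(x_1,x_2,x_3)$, which is exactly the asserted normal form; conversely any pair in that form obviously has $D|_{x_0=0}=\{x_1^2f_1=0\}$ non-reduced, so the equivalence holds.

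The only genuine subtlety, and the step I would be most careful about, is the converse-to-normal-form bookkeeping: one must check that the freedom to change coordinates fixing $x_0$ (equivalently, the $\mathrm{Aut}(\PP^2)$ acting on $\{x_0=0\}$ together with rescaling) is enough to bring the squared linear factor of $g$ to $x_1^2$ while keeping $P=(0,0,0,1)$ on the double line, and that after doing so the leftover cubic terms genuinely reorganize as $x_1^2l_1$ with $l_1$ an honest (possibly inhomogeneous in the affine chart, but homogeneous here) linear form — there is no obstruction because a cubic form divisible by $x_1^2$ is literally $x_1^2$ times a linear form. I would also note for completeness that the case where the non-reduced locus of $D$ does not a priori meet a chosen surface singularity is irrelevant: we only need \emph{some} point of $D$, and every point of a non-reduced plane cubic can be moved to $P$, so no case distinction on $\mathrm{Sing}(S)$ is needed. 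The argument is then a direct transcription of the proof of Lemma \ref{lemma:3LLL}, and I would present it in two or three lines accordingly.
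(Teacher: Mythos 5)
Your argument is correct and is essentially the paper's own proof: the paper also starts from the normal form \eqref{eq:pair-equations} and simply invokes projective equivalence in the plane $\{x_0=0\}$ to move the multiple line of the non-reduced cubic to $\{x_1=0\}$, so that the restricted cubic becomes $x_1^2$ times a linear form. The only blemish is your intermediate coefficient bookkeeping (divisibility of $x_3^2f_1+x_3g_2+f_3$ by $x_1^2$ in fact forces $f_1\equiv 0$ and $g_2$ proportional to $x_1^2$, not merely divisibility of each piece by $x_1$), but this is immaterial since, as you yourself note, a cubic form divisible by $x_1^2$ is literally $x_1^2$ times a linear form, which is all the conclusion needs.
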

\begin{lemma}
\label{lemma:3L}
A pair $(S,D)$ has $D=L+C$ where $L$ is a line and $C$ is a conic such that $3L\in |-K_S|$ if and only if it is projectively equivalent  to the pair defined by equations:
\begin{equation*}
x_0f_2(x_0,x_1,x_2,x_3)+ax_1^3=0,\qquad l_1(x_0,x_1)=0,
\end{equation*}
where $L$ and $3L$ are projectively equivalent to $\{x_0=x_1=0\}$ and $=\{x_0=0\}|_S$, respectively. This surface has a point $Q\in L\subset \mathrm{Supp}(D)$ such that $S$ has a singularity at $Q$ that is not of type $\boldsymbol{A}_1$.
\end{lemma}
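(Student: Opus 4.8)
The plan is to convert the two hypotheses on $(S,D)$ --- that $L\subset S$ is a line and that $3L$ is cut out on $S$ by a hyperplane --- into a normal form for the defining equations, and then to read the singularity of $S$ along $L$ directly off that normal form.

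For the forward implication I would argue as follows. Since $|-K_S|=|\oo_S(1)|$, the divisor $3L$ is a hyperplane section $S\cap H'$, and because $L\subseteq 3L$ the hyperplane $H'$ contains $L$. Choosing coordinates with $L=\{x_0=x_1=0\}$, every hyperplane through $L$ has the form $\{\alpha x_0+\beta x_1=0\}$, so after a linear substitution mixing only $x_0$ and $x_1$ (which preserves $L$) we may take $H'=\{x_0=0\}$. Inside $H'\cong\PP^2_{(x_1,x_2,x_3)}$ the line $L$ is $\{x_1=0\}$, so the condition $S\cap H'=3L$ says exactly that the cubic form $F$ defining $S$ satisfies $F(0,x_1,x_2,x_3)=ax_1^3$; since $3L$ is genuinely a curve, $\{x_0=0\}$ is not a component of $S$ and $a\neq0$. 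Therefore $F-ax_1^3$ vanishes on $\{x_0=0\}$, i.e. $F=x_0f_2(x_0,x_1,x_2,x_3)+ax_1^3$, and since $L\subseteq D$ the hyperplane cutting out $D$ must contain $L$, i.e. $D=\{F=0\}\cap\{l_1(x_0,x_1)=0\}$. This is the asserted normal form.

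For the converse I would simply verify, starting from $F=x_0f_2+ax_1^3$ with $a\neq0$ and $H=\{l_1(x_0,x_1)=0\}$, the three required facts: $F(0,0,x_2,x_3)\equiv0$, so $L:=\{x_0=x_1=0\}\subseteq S$; the restriction of $F$ to $\{x_0=0\}$ equals $ax_1^3$, so $S\cap\{x_0=0\}=3L$ and hence $3L\in|\oo_S(1)|=|-K_S|$; and $H\supseteq L$, so $L\subseteq D=S\cap H$ and the residual divisor $C:=D-L$ is a conic, giving $D=L+C$. To produce the bad point I would compute the partials of $F$ along $L$: one finds $\partial_{x_1}F,\partial_{x_2}F,\partial_{x_3}F$ vanish identically on $L$ while $\partial_{x_0}F$ restricts to $f_2(0,0,x_2,x_3)$ there. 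As $f_2(0,0,x_2,x_3)$ is a binary quadratic form in $x_2,x_3$ it has a zero, yielding a point $Q\in L\subseteq\mathrm{Supp}(D)$ at which $S$ is singular (if this quadratic vanishes identically, $S$ is singular along all of $L$, which is non-isolated and in particular not $\boldsymbol{A}_1$). After a linear change in $x_2,x_3$ fixing $x_0,x_1$ we may take $Q=(0,0,0,1)$, so $f_2(0,0,0,1)=0$; then in the chart $\{x_3=1\}$ the only contribution to the degree-$2$ part of $F$ at $Q$ is $x_0$ times the linear-in-$(x_0,x_1,x_2)$ part of $f_2(x_0,x_1,x_2,1)$, hence of the shape $x_0(c_0x_0+c_1x_1+c_2x_2)$, whose associated symmetric $3\times3$ matrix has rank at most $2$. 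An $\boldsymbol{A}_1$ singularity is an ordinary node, with quadratic part of rank $3$, so $Q$ is not of type $\boldsymbol{A}_1$.

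The coordinate normalizations are routine; the one step requiring a little care is the final rank estimate, where one exploits the very rigid shape $x_0f_2+ax_1^3$ --- in particular that no admissible term can supply an $x_1^2$, $x_1x_2$ or $x_2^2$ contribution to the quadratic part of $F$ at $Q$ --- to conclude that that quadratic part can never be nondegenerate. This rigidity is exactly what forces $S$ to be worse than nodal along the line $L$ supporting $3L$.
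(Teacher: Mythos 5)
Your proof is correct, and the derivation of the normal form is in substance the paper's: both arguments normalize coordinates so that the hyperplane cutting out $3L$ becomes $\{x_0=0\}$ with $L=\{x_0=x_1=0\}$, read off that $F(0,x_1,x_2,x_3)=ax_1^3$, and note that the hyperplane cutting out $D$ must contain $L$, hence is $\{l_1(x_0,x_1)=0\}$; your version is a little cleaner in that you normalize the $3L$-hyperplane first and thereby avoid the paper's case split between $D$ cut out by $x_0$ and by $x_0+bx_1$. The genuine difference is the final assertion: the paper dispatches the existence of a singular point $Q\in L$ not of type $\boldsymbol{A}_1$ by citing it as a well-known fact (Mukai, p.~227), whereas you prove it directly --- the partials of $x_0f_2+ax_1^3$ vanish identically along $L$ except $\partial_{x_0}F|_L=f_2(0,0,x_2,x_3)$, which either vanishes identically (non-isolated singularities along $L$) or has a zero $Q\in L$, and at such a $Q$ the quadratic part of the local equation is $x_0\cdot(\text{linear form})$, of rank at most $2$, so $Q$ cannot be an ordinary node. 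This makes the lemma self-contained, and your explicit verification of the converse is also more than the paper records (its proof only treats the forward direction). The only caveat, shared with the paper, is that degenerate choices (e.g.\ $a=0$, or $l_1$ dividing $F$) are glossed over; this is harmless for the intended GIT application, where the lemma functions as a statement about which monomials can occur.
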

\begin{proof}
Suppose $(S,D)$ as in the statement. Without loss of generality, we may suppose that the equation of $S$ is as in \eqref{eq:pair-equations}, $D=\{x_0+b x_1=0\}$ and let $D':=\{x_0=0\}$. Clearly $L\subset \mathrm{Supp}(D')\cap \mathrm{Supp}(D)$ and $D=D'$ if and only if $b=0$. In this case, the equation of $D=D'$ in $\{x_0=0\}\cong \PP^2$ is given by
$x_3^2f_1(x_1,x_2)+x_3g_2(x_1,x_2)+f_3(x_1,x_2)=0$
and $3L\in |-K_S|$ if and only if $f_1=g_2\equiv 0$ and $f_3=ax_1^3$. If $b\neq 0$, then $x_1=-\frac{x_0}{b}$. Take $x_0=0$ in \eqref{eq:pair-equations}. The equation of $D'=\{x_0=0\}|_S$ is $x_3^2f_1+x_3g_2+f_3=0$ and $D'\equiv 3L$ if and only if $f_1=g_2=0$ and $f_3=x_1^3$. But then, the equation of $D$ in $\{x_0+bx_1=0\}$ is $x_1(bf_2+x_1^2)$ and $C=\{bf_2+x_1^2=x_0+bx_1=0\}$. It is a well known fact that the line $L$ contains a point $Q$ at which $S$ is singular and $Q$ is not of type $\boldsymbol{A}_1$ (see \cite[p. 227]{mukai}).
\end{proof}

\begin{lemma}\label{lemma:CuspS}
Given a pair $(S,D)$, $S$ is singular at a point $P\in D$ and $D$ is an $\boldsymbol{A}_2$ singularity at $P$ or a degeneration of one if and only if $(S,D)$ is projectively equivalent to the pair defined by equations:
\begin{align}
x_3x_0l_1(x_0,x_1,x_2)+x_3x_1^2+f_3(x_1,x_2)+x_0f_2(x_0,x_1,x_2)=0,
 & &
x_0=0.
\end{align}
\end{lemma}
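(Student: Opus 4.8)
The plan is to bootstrap from Lemma~\ref{lemma:cuspD}, which already classifies the pairs whose divisor $D$ has an $\boldsymbol{A}_2$ singularity at $P$ or a degeneration of one, and then to superimpose the single extra requirement that the surface $S$ also be singular at $P$.

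For the ``only if'' direction I would first apply Lemma~\ref{lemma:cuspD}: after acting by $\mathrm{Aut}(\PP^3)$ we may assume $P=(0,0,0,1)$, $D=\{x_0=0\}\cap S$, and
$$F=x_0f_2(x_0,x_1,x_2,x_3)+x_3x_1^2+f_3(x_1,x_2),$$
with $f_2$ a quadratic form in $x_0,\dots,x_3$ and $f_3$ a cubic form in $x_1,x_2$. Next I would impose that $S$ be singular at $P$. Computing the four partials of $F$ at $P=(0,0,0,1)$, one sees that every monomial of $\partial F/\partial x_1$, $\partial F/\partial x_2$ and $\partial F/\partial x_3$ lies in the ideal $(x_0,x_1,x_2)$ of $P$, so these vanish at $P$ automatically, while $\partial F/\partial x_0(P)$ equals the coefficient of the monomial $x_0x_3^2$ in $F$. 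Hence $S$ is singular at $P$ exactly when that coefficient vanishes, i.e.\ when $f_2(x_0,x_1,x_2,x_3)=x_3\,l_1(x_0,x_1,x_2)+f_2'(x_0,x_1,x_2)$ for a linear form $l_1$ and a quadratic form $f_2'$ in $x_0,x_1,x_2$; substituting this into $x_0f_2$ and renaming $f_2'$ as $f_2$ yields exactly the displayed equations.

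For the ``if'' direction I would run the same equivalences backwards. Writing $F=x_3x_0l_1(x_0,x_1,x_2)+x_3x_1^2+f_3(x_1,x_2)+x_0f_2(x_0,x_1,x_2)$ and $H=x_0$, a one-line check gives $F(P)=H(P)=0$ and shows all four partials of $F$ vanish at $P=(0,0,0,1)$, so $S$ is singular at $P\in D$; moreover $F|_{\{x_0=0\}}=x_3x_1^2+f_3(x_1,x_2)$, so Lemma~\ref{lemma:cuspD} gives that $D=\{x_0=0\}\cap S$ has an $\boldsymbol{A}_2$ singularity at $P$ or a degeneration of one.

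I do not expect a real obstacle. The one point that needs care is that Lemma~\ref{lemma:cuspD} already uses up coordinate freedom, so one must check that imposing ``$S$ singular at $P$'' afterwards genuinely constrains the coefficients rather than requiring further conjugation; since this step merely kills the single monomial $x_0x_3^2$ at the already-fixed point $P$, the two reductions do not interfere. As a cross-check one can instead start from Lemma~\ref{lemma:singular-cubics}, write $F=x_3f_2(x_0,x_1,x_2)+f_3(x_0,x_1,x_2)$ with $D=\{x_0=0\}$, note that $S$ singular at $P$ already forces $\mult_P D\geqslant 2$, and observe that $D$ is worse than a node at $P$ precisely when the binary quadratic $f_2(0,x_1,x_2)$ is a perfect square (possibly zero); normalizing that square to $x_1^2$ by a linear change of $x_1,x_2$ recovers the same equations.
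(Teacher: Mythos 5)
Your proof is correct and takes essentially the same route as the paper: both arguments obtain the normal form by combining Lemma \ref{lemma:cuspD} with the condition that $S$ be singular at the fixed point $P=(0,0,0,1)$. The only difference is the order of the two reductions --- the paper starts from the singular-surface form of Lemma \ref{lemma:singular-cubics} and matches it against Lemma \ref{lemma:cuspD}, whereas you start from Lemma \ref{lemma:cuspD} and impose the singularity via the Jacobian criterion (killing the $x_0x_3^2$ coefficient), which is precisely the cross-check you yourself note; the computation is sound.
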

\begin{proof}
Without loss of generality we can assume $P=(0,0,0,1)$. 
 Then, the equation of $S$ can be written as ({see \cite[Section 2, pp. 247--252]{classificationcubics}})
\begin{align*}
&\qquad x_3h_2(x_0,x_1,x_2)+h_3(x_0,x_1,x_2)=\\
&=a_0x_3x_1^2+x_0f_2(x_0,x_1,x_2)+f_3(x_1,x_2)+x_1x_3g_1(x_0,x_2)+x_3g_2(x_0,x_2).
\end{align*}
By comparing with the equation in Lemma \ref{lemma:cuspD}, $D$ has (a degeneration of) an $\boldsymbol{A}_2$ singularity at $P$ if and only if $g_1(x_0,x_2)=ax_0$ and $g_2(x_0,x_2)=bx_0^2+cx_0x_2$. The lemma follows.
\end{proof}

The proof of the next lemma is similar to that of Lemma \ref{lemma:CuspS}.
\begin{lemma}
\label{lemma:CLS}
Given a pair $(S,D)$, $S$ is singular at a point $P\in D$ and $D$ has an $\boldsymbol{A}_3$ singularity at $P$ or a degeneration of one if and only if $(S, D)$ is projectively equivalent  to the pair defined by equations:
$$ 
x_0^2l_1(x_0,x_1,x_2,x_3)+x_0f_2(x_1,x_2)  +x_0x_3g_1(x_1,x_2) +x_1^2h_1(x_1,x_2,x_3)
 +x_1x_2^2=0,
$$
$x_0 =0$.
\end{lemma}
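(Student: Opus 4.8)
The plan is to deduce the statement from Lemma \ref{lemma:CL}, which already classifies the pairs $(S,D)$ whose divisor $D$ has an $\boldsymbol{A}_3$ singularity at $P=(0,0,0,1)$ (or a degeneration of one), and then to impose the single extra requirement that $S$ itself be singular at $P$. Concretely, I would first apply Lemma \ref{lemma:CL} to assume, after acting by $\mathrm{Aut}(\mathbb{P}^3)$, that $H=\{x_0=0\}$, that $P=(0,0,0,1)$, and that
\[
F=x_0f_2(x_0,x_1,x_2,x_3)+x_1(x_2^2+x_1l_1(x_1,x_2,x_3)).
\]
From this point on no further coordinate change is needed; the remaining work is just to locate the singular locus of $\{F=0\}$ and to regroup monomials.

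Next I would impose that $S=\{F=0\}$ is singular at $P$ and show this is equivalent to $f_2$ having no $x_3^2$-term. Indeed $F(P)=0$ automatically, so $P\in S$, and since every monomial of $F$ not divisible by $x_0$ is divisible by $x_1^2$ or equals $x_1x_2^2$, the partials $\partial_{x_1}F$, $\partial_{x_2}F$ and $\partial_{x_3}F$ all vanish at $(0,0,0,1)$, while $\partial_{x_0}F(P)$ is exactly the coefficient of $x_3^2$ in $f_2$. Hence $S$ is singular at $P$ precisely when $f_2=x_3e_1(x_0,x_1,x_2)+Q(x_0,x_1,x_2)$ for a linear form $e_1$ and a quadratic form $Q$, neither involving $x_3$.

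Finally I would regroup. Writing $e_1=\alpha x_0+g_1(x_1,x_2)$ and $Q=x_0R_1(x_0,x_1,x_2)+q_2(x_1,x_2)$, with $g_1,q_2$ in $x_1,x_2$ only and $R_1$ linear, one gets
\[
F=x_0^2(\alpha x_3+R_1(x_0,x_1,x_2))+x_0x_3g_1(x_1,x_2)+x_0q_2(x_1,x_2)+x_1^2l_1(x_1,x_2,x_3)+x_1x_2^2,
\]
which is exactly the asserted form (with $\alpha x_3+R_1$, $q_2$, $g_1$ and the original $l_1$ playing the roles of the $l_1$, $f_2$, $g_1$ and $h_1$ of the statement). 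For the converse, a surface written in the asserted form is singular at $(0,0,0,1)$ by the same derivative computation, and its hyperplane section $\{x_0=0\}$ has equation $x_1(x_2^2+x_1h_1(x_1,x_2,x_3))$, so Lemma \ref{lemma:CL} shows that $D$ has an $\boldsymbol{A}_3$ singularity or a degeneration of one at $P$. I do not expect a genuine obstacle here: the substantive step---putting $D$ into the normal form of a line plus a tangent conic, together with the planar automorphisms this entails---is already contained in Lemma \ref{lemma:CL}, so what remains is only the clerical bookkeeping of the two splittings above.
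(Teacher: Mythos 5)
Your proposal is correct and is essentially the paper's own argument run in the opposite order: the paper first uses the singularity of $S$ at $P$ (Lemma \ref{lemma:singular-cubics}) to constrain $F$ and then matches the hyperplane section with the normal form of Lemma \ref{lemma:CL}, whereas you invoke Lemma \ref{lemma:CL} first and then impose the singularity at $P$ via the equivalent observation that only the coefficient of $x_0x_3^2$ obstructs it, followed by the same regrouping of terms. No gap.
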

\begin{lemma}
\label{lemma:Gm-action}
Let $(S,D)$ be a pair that is invariant under a non-trivial $\mathbb C^*$-action. Suppose the singularities of $S$ and $D$ are given as in the first and second entries in one of the rows of Table \ref{tab:Gm-action}, respectively. Then $(S,D)$ is projectively equivalent  to $(\{F=0\},\{F=H=0\})$ for $F$ and $H$ as in the third and fourth entries in the same row of Table \ref{tab:Gm-action}, respectively. In particular, any such pair $(S,D)$ is unique. Conversely, if $(S,D)$ is given by equations as in the third and fourth entries in a given row of Table \ref{tab:Gm-action}, then $(S, D)$ has singularities as in the first and second entries in the same row of Table \ref{tab:Gm-action} and $(S,D)$ is $\mathbb C^*$-invariant. Furthermore the element $\lambda\in \SL(4, \mathbb C^*)$, as defined in Lemma \ref{lemma:critical1PS}, given in the entry of the corresponding row of Table \ref{tab:Gm-action} is a generator of the $\mathbb C^*$-action. 
\end{lemma}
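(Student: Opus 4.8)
The plan is to go through the rows of Table~\ref{tab:Gm-action} one at a time, in each case combining the structure lemma of Section~\ref{sec:1ps} that matches the prescribed singularities of $S$ and $D$ with the rigidity forced by the $\mathbb{C}^*$-action. First I would record the general principle: a pair $(S,D)=(\{F=0\},\{F=H=0\})$ is invariant under a non-trivial $\mathbb{C}^*$-action if and only if, after conjugating by $\mathrm{Aut}(\mathbb{P}^3)$ so that the acting torus is diagonal, there is a nonzero one-parameter subgroup $\lambda=\mathrm{Diag}(r_0,\dots,r_3)$ with $\sum r_i=0$ under which both $F$ and $H$ are semi-invariant, that is, every monomial occurring in $F$ has one and the same $\lambda$-weight and $H$ is a sum of coordinates all of a single $\lambda$-weight. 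Permuting coordinates we may take $H=x_0$ (or whichever coordinate the action forces), and we may still rescale coordinates by the subtorus of diagonal matrices commuting with $\lambda$.

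Then, for a fixed row, I would place the distinguished singular point at $P=(0,0,0,1)$ and invoke the corresponding lemma among Lemmas~\ref{lemma:A1D}--\ref{lemma:CLS} --- for instance Lemma~\ref{lemma:cuspD} when $D$ is cuspidal at $P$, Lemma~\ref{lemma:E6} when $S$ has an ${\boldsymbol{E}}_6$ singularity, Lemma~\ref{lemma:3LLL} or~\ref{lemma:2LL} for the degenerate types of $D$ --- to put $F$ in the partial normal form dictated by those singularities. Imposing that all surviving monomials of $F$ share a single $\lambda$-weight and that $x_0$ has a prescribed weight turns into a homogeneous linear system for $(r_0,\dots,r_3)$; up to the overall scaling of $\lambda$ this system has a unique solution, which I would check is one of the $\lambda_k$ or $\overline{\lambda}_k$ of Lemma~\ref{lemma:critical1PS} and agrees with the fifth column of the table. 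The same weight constraints kill all but finitely many coefficients of $F$, and the residual torus normalizes the survivors to the constants displayed in the third and fourth columns, giving both the explicit equations and their uniqueness. When an extra geometric condition is part of the data of a row --- an Eckardt point for $(S_3,D_3)$, or the relation $3L\in|-K_S|$ together with the presence of a non-$\boldsymbol{A}_1$ surface point on $L$ for $(S_1,D_1)$ --- I would translate it into coordinates via Lemma~\ref{lemma:3L} or the coplanarity of the three lines through the Eckardt point, which fixes the one remaining free coefficient.

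For the converse I would start from the equations in the third and fourth columns and check directly that every monomial of $F$, together with $x_0$, has the $\lambda$-weight recorded in the fifth column, so that $(S,D)$ is indeed $\mathbb{C}^*$-invariant with that generator. To pin down the singularities I would localize at the relevant point, verify the semi-quasi-homogeneity condition of Definition~\ref{def:ADEweights} against the weight list for the claimed ADE type, and apply Lemma~\ref{lemma:recog} to recognize the type of $S$ (and, restricting to $\{x_0=0\}$, the type of $D$ via Table~\ref{tab:cubic-curves}); that $S$ carries no further singularities follows because $\sum_i\mu(T_i)\le 6$ for every cubic surface, so the listed points already exhaust the Milnor number, with Lemma~\ref{lemma:versal} ruling out hidden degenerations.

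The main obstacle I anticipate is the uniqueness bookkeeping: proving row by row that the singularity constraints and the $\mathbb{C}^*$-invariance together leave no moduli, and that the admissible diagonal $\lambda$ is unique up to scaling, so that no $\mathbb{C}^*$-invariant family with the prescribed singularities has been missed. A secondary complication is the rows where $H$ is not a single coordinate or where $\mathrm{Sing}(S)$ consists of several points --- as for $(S_0,D_0)$ with its three $\boldsymbol{A}_2$ points, or $(S_1,D_1)$ --- since then the equal-weight condition must be imposed simultaneously at all of them; here it is convenient to invoke the $\mathbb{C}^*$-normal forms already tabulated by Bruce and Wall~\cite{classificationcubics} for the underlying surfaces and only add the analysis of the anticanonical divisor.
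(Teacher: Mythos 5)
Your plan is sound and would prove the lemma, but it takes a partly different route from the paper. For the surface equations the paper does not re-derive normal forms from the Section~\ref{sec:1ps} lemmas: it quotes the uniqueness of the cubic with three $\boldsymbol{A}_2$ points from \cite[p.~255]{classificationcubics} and reads off the equations of the $\mathbb C^*$-invariant surfaces with $\boldsymbol{A}_3+2\boldsymbol{A}_1$, $\boldsymbol{A}_4+\boldsymbol{A}_1$, $\boldsymbol{A}_5+\boldsymbol{A}_1$, $\boldsymbol{D}_4$, $\boldsymbol{D}_5$, ${\boldsymbol{E}}_6$ directly from \cite[Table~3]{du2000hypersurfaces}, which is essentially the fallback you yourself propose for the multi-singularity rows; your alternative of imposing equal $\lambda$-weights on the partial normal forms of Lemmas~\ref{lemma:A1D}--\ref{lemma:CLS} can be made to work, but it is exactly where the case-by-case effort sits, and you should be aware that those normal-form reductions must then be performed by coordinate changes commuting with the (diagonalized) torus, a compatibility your sketch glosses over. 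Where the paper's proof does real work that your sketch compresses is the divisor: for each fixed $F$ it enumerates the finitely many $\lambda$-invariant hyperplanes ($H=x_i$ when the weights of $\lambda$ are distinct, plus the pencils $x_1-ax_2$, $x_0-ax_1$ in the $\lambda_9$ and $\overline\lambda_3$ rows) and computes the singularities of each $D_H$ to exclude all but the listed one; your substitute --- imposing the singularities of $D$ via the pair-lemmas such as \ref{lemma:cuspD} and \ref{lemma:3LLL} --- is viable but amounts to the same enumeration in different clothing. Your treatment of the converse via Definition~\ref{def:ADEweights}, Lemma~\ref{lemma:recog} and the Milnor-number bound is more detailed than the paper, which simply declares that verification straightforward.

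One claim in your outline is false as stated: the admissible $\lambda$ is in general \emph{not} unique up to scaling. For the pair $(S_0,D_0)$ with $F=x_0x_1x_3+x_2^3$, $H=x_2$, the stabilizer contains a two-dimensional torus (any $\mathrm{Diag}(r_0,r_1,r_2,r_3)$ with $r_0+r_1+r_3=3r_2$ works), so your linear system has a positive-dimensional solution space. This does not damage the lemma --- which only asserts that the listed $\lambda_k$ or $\overline\lambda_k$ \emph{is a} generator of a $\mathbb C^*$-action, as the paper checks by direct inspection --- but your uniqueness bookkeeping should rest on the rigidity of the equations of $(S,D)$, not on uniqueness of $\lambda$.
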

	
\begin{table}[!ht]%
\begin{center}
\begin{tabular}{|p{1.8cm}|p{3cm}|p{4cm}|c|c|}
\hline
$\mathrm{Sing}(S)$ & $\mathrm{Sing}(D)$ & $F$& $H$ & $\lambda$ \\
\hline\hline
$P_i=\boldsymbol{A}_2$, $i=1,2,3$ &  $\boldsymbol{A}_1$ at each $P_i$ & $x_0x_1x_3+x_2^3$ & $x_2$ & $\overline \lambda_2$\\
\hline
$P=\boldsymbol{A}_3$, $Q_1=\boldsymbol{A}_1$, $Q_2=\boldsymbol{A}_1$ & $D=2L+L'$, $Q_1,Q_2\in L$, $\mathrm{Sing}(S)\cap L'=\emptyset$ & $x_0x_1x_3+x_1x_2^2+x_0x_2^2$ & $x_3$ & $\overline\lambda_3$ \\
\hline
$P=\boldsymbol{A}_4$, $Q=\boldsymbol{A}_1$ & $\boldsymbol{A}_3$ at $Q$ & $x_0x_1x_3 + x_0x_2^2+x_1^2x_2$ & $x_3$ & $\lambda_5$ \\
\hline
$P=\boldsymbol{A}_5$, $Q=\boldsymbol{A}_1$ & $\boldsymbol{A}_2$ at $Q$ & $x_0x_2^2+x_0x_1x_3+x_1^3$ & $x_3$ & $\lambda_6 $ \\
\hline
$P=\boldsymbol{D}_4$ &  $\boldsymbol{D}_4$ not at $P$ & $x_0^2x_3 + x_1^3 +x_2^3 $ & $x_3$ & $\lambda_9 $ \\
\hline
$P=\boldsymbol{D}_5$ & $\boldsymbol{A}_3$ not at $P$ & $x_0^2x_3+x_0x_2^2+x_1^2x_2$ & $x_3$ & $\overline\lambda_6$ \\
\hline
$P={\boldsymbol{E}}_6$ & $\boldsymbol{A}_2$ not at $P$ & $x_0^2x_3+x_0x_2^2+x_1^3$ & $x_3$ & $ \overline\lambda_4$ \\
\hline
\end{tabular}
\caption{Some pairs $(S,D)$ invariant under a $\mathbb C^*$-action.}
\label{tab:Gm-action}
\end{center}
\end{table}
\begin{proof}
There is a unique surface $S$ with three $\boldsymbol{A}_2$ singularities \cite[p. 255]{classificationcubics} which corresponds to the equation in Table \ref{tab:Gm-action}. When a  surface $S$ has singularities $\boldsymbol{A}_4+\boldsymbol{A}_1$, $\boldsymbol{A}_5+\boldsymbol{A}_1$, $\boldsymbol{D}_4$, $\boldsymbol{D}_5$ or ${\boldsymbol{E}}_6$, and a $\mathbb C^*$-action, the equation for $F$ follows from \cite[{Table $3$}]{du2000hypersurfaces}. If $S$ has singularities $\boldsymbol{A}_3+2\boldsymbol{A}_1$, then \cite[Table $3$]{du2000hypersurfaces} gives that $S$ has equation $x_3f_2(x_0,x_1)+x^2_2l_1(x_0,x_1)=0$, where $x_0x_1$ has a non-zero coefficient in $f_2$, since otherwise $S$ is singular along a line. Hence, after a change of coordinates involving only variables $x_0$ and $x_1$ and rescaling $x_3$, we obtain the desired result. It is trivial to check that each one-parameter subgroup $\lambda$ in the corresponding row of Table \ref{tab:Gm-action} leaves $S$ invariant, and therefore $\lambda$ is a generator of the $\mathbb C^*$-action.

Given $H$, denote $D_H=\{F=H=0\}\subset S$. We need to show that for $(S,D)$ with prescribed singularities, $D_H=D$ if and only if $H$ is as stated in Table \ref{tab:Gm-action}. Verifying that for $F$ and $H$ as in the table, the pair $(S,D)$ has the exepected singularities is straight forward and we omit it. We verify the converse.

Suppose that $S$ has three $\boldsymbol{A}_2$ singularities. Then we may assume that $F=x_0x_1x_3+x_2^3$ and the singularities correspond to $P_1=(1,0,0,0)$, $P_2=(0,1,0,0)$ and $P_3=(1,0,0,0)$. There are only three lines $L_1,L_2,L_3$ in $S$ \cite[p. 255]{classificationcubics}, which correspond to $\{x_2=x_i=0\}$ for $i=0,1,3$, respectively. Clearly any two of these intersect at each of the points $P_j$. Moreover $D_H=D=\sum L_i$ and $D$ has an $\boldsymbol{A}_1$ singularity at each $P_i$, as stated in Table \ref{tab:Gm-action}.

Suppose that $S$ has an ${\boldsymbol{E}}_6$ singularity at a point $P$ and $D$ has an $\boldsymbol{A}_2$ singularity at a point $Q\neq P$ and $(S,D)$ is $\mathbb C^*$-invariant. Without loss of generality, we can now assume that $F=x_0^2x_3+x_0x_2^2+x_1^3$, $H=\sum a_ix_i$ for some parameters $a_i$ and $P=(0,0,0,1)$. Since $\overline \lambda_4$ is a generator of the $\mathbb C^*$-action, then $\overline\lambda_4(t)\cdot H=a_0t^{11}x_0+a_1t^3x_1+a_2t^{-1}x_2+a_3t^{-13}x_3$. Therefore $D_H$ is $\mathbb C^*$-invariant if and only if $H=x_i$ for some $i=0,\dots,3$. Notice that this happens every time the entries of $\lambda$ are distinct. If $H=x_0$, then $D_H$ is a triple line. If $H=x_1$, then $D_H$ is the union of a conic and a line, and therefore $D_H$ does not have an $\boldsymbol{A}_2$ singularity. If $H=x_2$, then $D_H$ has an $\boldsymbol{A}_2$ singularity at $P$. If $H=x_3$, then $D_H$ has an $\boldsymbol{A}_2$ singularity at $Q=(1,0,0,0)\neq P$ and $D_H=D$. 

Suppose $S$ has a $\boldsymbol{D}_5$ singularity at a point $P$, $D$ has an $\boldsymbol{A}_3$ singularity at a point $Q\neq P$ and $(S,D)$ is $\mathbb C^*$-invariant. There is a unique pair satisfying these conditions. Reasoning as in the previous case, we may assume $F=x_0^2x_3+x_0x_2^2+x_1^2x_2$, $H=x_i$ for some $i=0,\dots,3$ and $P=(0,0,0,1)$.  It follows from the equations that $\overline\lambda_6$ generates the $\mathbb C^*$-action. If $H=x_0$ or $H=x_2$, then the support of $D_H$ contains a double line. If $H=x_2$, then $D_H$ has an $\boldsymbol{A}_3$ singularity at $P$. If $H=x_3$, then $D_H$ has an $\boldsymbol{A}_3$ singularity at $Q=(1,0,0,0)\neq P$ and $D_H=D$. 

Suppose $S$ has an $\boldsymbol{A}_5$ singularity at a point $P$ and an $\boldsymbol{A}_1$ singularity at a point $Q$, $D$ has an $\boldsymbol{A}_2$ singularity at $Q$ and $(S,D)$ is $\mathbb C^*$-invariant. We may assume $\lambda_6$ generates the $\mathbb C^*$-action, $F=x_0x_2^2+x_0x_1x_3+x_1^3$, $H=x_i$ for some $i=0,\dots,3$, $P=(0,0,0,1)$ and $Q=(1,0,0,0)$. If $H=x_0$ then $D_H$ is a triple line. If $H=x_1$, then $D_H$ has a double line in its support. If $H=x_2$, then $D_H$ has two $\boldsymbol{A}_1$ singularities. If $H=x_3$, then $D_H$ has an $\boldsymbol{A}_2$ singularity at $Q=(1,0,0,0)\neq P$ and $D_H=D$. 

Suppose $S$ has an $\boldsymbol{A}_4$ singularity at a point $P$ and an $\boldsymbol{A}_1$ singularity at a point $Q$, $D$ has an $\boldsymbol{A}_3$ singularity at $Q$ and $(S,D)$ is $\mathbb C^*$-invariant. We may assume $\lambda_5$ generates the $\mathbb C^*$-action, $F=x_0x_1x_3 + x_0x_2^2+x_1^2x_2$, $H=x_i$ for some $i=0,\dots,3$, $P=(0,0,0,1)$ and $Q=(1,0,0,0)$. If $H=x_0$ or $H=x_1$ then $D_H$ contains a double line in its support. If $H=x_2$, then $D_H$ has three $\boldsymbol{A}_2$ singularities and if $H=x_3$, then $D_H$ has an $\boldsymbol{A}_2$ singularity at $Q$ and $D_H=D$. 

Suppose $S$ has a $\boldsymbol{D}_4$ singularity at a point $P$, $D$ has a $\boldsymbol{D}_4$ singularity at a point $Q\neq P$ and $(S,D)$ is $\mathbb C^*$-invariant. We may assume the generator of the $\mathbb C^*$-action is $\lambda_9$, $F=x_0^2x_3 + x_1^3 +x_2^3 $ and $P=(0,0,0,1)$. If $D_H$ is $\lambda_9$-invariant, either $H=x_i$ for some $i=0,\dots,3$ or $H=x_1-ax_2$ for $a\neq 0$. If $H=x_0$, then $D_H$ has a $\boldsymbol{D}_4$ singularity at $P$. If $H=x_1$ or $H=x_2$, then $D_H$ has an $\boldsymbol{A}_2$ singularity. If $H=x_1-ax_2$ with $a\neq 0$, then $D_H=\{x_0^2x_3+\left(1 + \frac{1}{a}\right)x_1^3=0, x_2=\frac{x_1}{a}\}$ has an $\boldsymbol{A}_2$ singularity. If $H=x_3$, then $D_H$ has a $\boldsymbol{D}_4$ singularity at $Q=(1,0,0,0)\neq P$ and $D_H=D$. 

Suppose $S$ has an $\boldsymbol{A}_3$ singularity at a point $P$, two $\boldsymbol{A}_1$ singularities at points $Q_1$ and $Q_2$, $D=2L+L'$ where $L$ is a line containing $Q_1$ and $Q_2$ and $L'$ is a line such that $P, Q_1,Q_2\not\in L'$. Furthermore, suppose $(S,D)$ is $\mathbb C^*$-invariant. We may assume that $\overline \lambda_3$ is the generator of the $\mathbb C^*$-action, $F=x_0x_1x_3+x_1x_2^2+x_0x_2^2$, $P=(0,0,0,1)$, $Q_1=(1,0,0,0)$, $Q_2=(0,1,0,0)$ and $L=\{x_2=x_3=0\}$. Moreover, if $D_H$ is $\overline\lambda_3$-invariant, either $H=x_i$ for some $i=0,\dots,3$ or $H=x_0-ax_1$ for $a\neq 0$. If $H=x_0$ or $H=x_1$, then $D_H$ does not contain $L$ in its support. If $H=x_2$ or $H=x_0-ax_1$, then $D_H$ is reduced. If $H=x_3$, then $D_H=2L+L'$, where $L'=\{x1+x_0=x_3=0\}$. Since $P, Q_1,Q_2\not\in L$, then $D_H=D$.

\end{proof}

\section{Proof of main theorems}\label{sec:stable}
We present the proofs of theorems \ref{theorem:stableGIT} and \ref{theorem:GITb}. First,  we reduce the amount of pairs we need to consider to those with isolated singularities:
\begin{lemma}
\label{lemma:realbad}
Let $(S,D)$ be a pair.
\begin{enumerate}
	\item If $S$ is reducible or not normal, then $(S,D)$ is $t$-unstable for $t\in [0,1)$.
	\item If $D$ is not reduced, then, $(S,D)$ is $t$-unstable for $t \in (1/5,1]$.
\end{enumerate} 
\end{lemma}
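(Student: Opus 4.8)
The plan is to establish both statements by the same device: via the Hilbert--Mumford numerical criterion it is enough to produce, for each $t$ in the asserted range, an element $g\in\SL(4,\mathbb C)$ and a one-parameter subgroup $\lambda$ with $\mu_t(g\cdot S,g\cdot D,\lambda)>0$, since a strictly positive value of $\mu_t$ forces $(S,D)$ to be $t$-unstable (see \cite{VGITour}). In each case $g$ will be the element of $\mathrm{Aut}(\PP^3)$ supplied by the relevant normal form of Section \ref{sec:1ps}, and $\lambda$ will be one of the one-parameter subgroups of Lemma \ref{lemma:critical1PS} or its dual; once the equations of $(S,D)$ are in normal form the inequality $\mu_t>0$ reduces to a weight count over the monomials of $F$ and $l$.

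For part (1) with $S$ reducible I would apply Lemma \ref{lemma:irre} to assume $F=x_0f_2(x_0,x_1,x_2,x_3)$, with $l$ an arbitrary nonzero linear form. Every monomial of $F$ equals $x_0$ times a degree-two monomial, hence pairs with $\overline{\lambda}_8=\mathrm{Diag}(3,-1,-1,-1)$ to at least $3+2(-1)=1$, while $\langle x_i,\overline{\lambda}_8\rangle\geq -1$ for all $i$; therefore $\mu_t(g\cdot S,g\cdot D,\overline{\lambda}_8)\geq 1-t>0$ for all $t\in[0,1)$. For $S$ irreducible and non-normal, Serre's criterion gives that $S$ has non-isolated singularities, so by Lemma \ref{lemma:nonnormal} I may assume $F=x_3f_2(x_0,x_1)+f_3(x_0,x_1)+x_2g_2(x_0,x_1)$; each monomial of $F$ has combined degree $d\geq 2$ in $x_0,x_1$, so it pairs with $\lambda_{10}=\mathrm{Diag}(1,1,-1,-1)$ to $2d-3\geq 1$, and again $\langle x_i,\lambda_{10}\rangle\geq -1$, giving $\mu_t\geq 1-t>0$ for $t\in[0,1)$. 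Both sub-cases are immediate once the normal forms are available; the cut-off at $t=1$ is exactly where $1-t$ ceases to be positive, matching the statement.

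For part (2) I would use Lemma \ref{lemma:2LL} to write $(S,D)$ as $F=x_0f_2(x_0,x_1,x_2,x_3)+x_1^2f_1(x_1,x_2,x_3)$, $H=\{x_0=0\}$, and pair with $\lambda_3=\mathrm{Diag}(5,1,-3,-3)$. A monomial of $x_0f_2$ is $x_0$ times a degree-two monomial, so it pairs to at least $5+2(-3)=-1$; a monomial of $x_1^2f_1$ is $x_1^2x_j$ with $j\in\{1,2,3\}$, so it pairs to at least $2+(-3)=-1$; and $\langle x_0,\lambda_3\rangle=5$. Hence $\mu_t(g\cdot S,g\cdot D,\lambda_3)\geq -1+5t$, which is $>0$ exactly for $t>\tfrac15$, and in particular throughout $(\tfrac15,1]$. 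The same estimate holds trivially when $f_1\equiv 0$, so the triple-line case $D=3L$ needs no separate treatment from $D=2L+L'$. The one point I expect to require genuine thought is the choice of this one-parameter subgroup together with the observation that the threshold it produces is sharp precisely at the wall $t_1=\tfrac15$; granting Lemma \ref{lemma:2LL}, however, everything collapses to the short weight computation above.
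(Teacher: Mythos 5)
Your proof is correct and follows essentially the same route as the paper: reduce to the normal forms of Lemmas \ref{lemma:nonnormal} and \ref{lemma:2LL} and destabilize with $\lambda_{10}$ (giving $\mu_t\geq 1-t>0$) and $\lambda_3$ (giving $\mu_t\geq -1+5t>0$ for $t>\tfrac15$), exactly as in the paper. The only difference is cosmetic: for $S$ reducible the paper simply cites \cite[Theorem 1.3]{VGITour}, whereas you make that sub-case self-contained by the weight count with $\overline{\lambda}_8$ via Lemma \ref{lemma:irre}, which is consistent with the $N^\oplus_t(\overline{\lambda}_8,x_3)$ row of Table \ref{tab:maximal}.
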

\begin{proof}
The case where $S$ is reducible follows from \cite[Theorem 1.3]{VGITour}. By Serre's criterion, any hypersurface of dimension $2$ is non-normal if and only if it has non-isolated singularities. The latter are classified for cubic surfaces in \cite[Case E]{classificationcubics}, hence $S$ is an irreducible non-normal cubic surface if and only if it is projectively equivalent to  
$\{x_3f_2(x_0,x_1)+f_3(x_0,x_1)+x_2g_2(x_0,x_1)=0\}.$
Then $\mu_t(S,D, \lambda_{10})\geqslant 1-t >0$. If $D$ is not reduced, we may assume $(S,D)$ is as in Lemma \ref{lemma:2LL}. Then $\mu_t( S,D, \lambda_3)= -1+5t >0$, if $t>\frac{1}{5}$.
\end{proof}

\begin{proof}[{Theorem \ref{theorem:stableGIT}}]
Let $(S,D)$ be a pair defined by equations $F$ and $H$. Notice that Lemma \ref{lemma:realbad} tells us that $S$ being normal is a necessary condition for $(S,D)$ to be $t$-stable for any $t\in(0,1)$. In particular $S$ has a finite number of singularities, since it is a surface. By Theorem \ref{theorem:unstMain}, the pair $(S,D)$ is $t$-stable if and only if for any $g\in \SL(4, \mathbb C)$ the monomials with non-zero coefficients of $(g\cdot F, g\cdot H)$ are not contained in a pair of sets $N^\oplus_t(\lambda, x_i)$ --- characterized geometrically in Section~\ref{sec:sing} --- which is maximal for every given $t$, as stated in Theorem \ref{theorem:unstMain}. These maximal sets can be found algorithmically \cite{VGITour, gallardo-jmg-code}. This is equivalent to the conditions in the statement. We verify the conditions for each $t\in (0,1)$. We will refer to the singularities of $D$ in terms of the ADE classification as in sections \ref{sec:sing} and \ref{sec:1ps}. These will be equivalent to the global description used in the statement of Theorem \ref{theorem:stableGIT} by Table \ref{tab:cubic-curves}.

Suppose $t\in(0,\frac{1}{5})$ and $(\lambda, x_i)=(\ol_3, x_3)$. Then $S$ cannot have an $\boldsymbol{A}_3$ singularity or a degeneration of one. When $(\lambda, x_i)=(\lambda_9, x_3)$, we deduce that $S$ cannot have a $\boldsymbol{D}_4$ singularity or a degeneration of one (this condition is redundant since $\boldsymbol{D}_4$ is a degeneration of $\boldsymbol{A}_3$). From $(\lambda, x_i)=(\lambda_1, x_2)$ or $(\lambda, x_i)=(\ol_2, x_2)$ we deduce that if $P\in D$ then $P$ is a singular point of $S$ of type at worst $\boldsymbol{A}_1$. We obtain the same condition if $(\lambda, x_i)=(\lambda_2, x_1)$. This completes the proof when $t\in (0, \frac{1}{5})$.

When $t=\frac{1}{5}$, the maximal sets $N^\oplus_t(\lambda, x_i)$ are the same as for $t\in\left(0, \frac{1}{5}\right)$ with the addition of $N^\oplus_t(\lambda_3, x_0)$, which represents the monomials of the equations of any pair $(S',D')$ such that $D'$ is not reduced. Therefore $(S,D)$ is $\frac{1}{5}$-stable if and only if in addition to the conditions for $t$-stability when $t\in(0,\frac{1}{5})$, $D$ is not reduced. Hence (ii) follows.

Let $t\in\left(\frac{1}{5}, \frac{1}{3}\right)$. The maximal $t$-non-stable sets $N^\oplus_t(\lambda, x_i)$ are the same as for $t=\frac{1}{5}$ but replacing the set $N^\oplus_t(\ol_3, x_3)$ with both $N^\oplus_t(\lambda_7,x_3)$ and $N^\oplus_t(\lambda_5,x_3)$. A pair $(S',D')$ whose defining equations have coefficients in one of $N^\oplus_t(\ol_3, x_3)$, $N^\oplus_t(\lambda_7,x_3)$ and $N^\oplus_t(\lambda_5,x_3)$ require that $S'$ has (a degeneration of) an $\boldsymbol{A}_3$ singularity, $S'$ is not normal or $S'$ has (a degeneration of) an $\boldsymbol{A}_4$ singularity, respectively. The second condition is redundant by Lemma \ref{lemma:realbad}. Hence a $t$-stable pair $(S,D)$ may now have $\boldsymbol{A}_3$ singularities but not $\boldsymbol{A}_4$ singularities. However, the coefficients of the equations of $(S,D)$ cannot be in $N^\oplus_t(\lambda_9, x_3)$ and hence $S$ cannot have (degenerations of) $\boldsymbol{D}_4$ singularities.  Therefore $(S,D)$  is $t$-stable if and only if $S$ has at worst $\boldsymbol{A}_3$ singularities, $D$ is reduced and if $D$ supports a surface singularity $P$, then $P$ must be an $\boldsymbol{A}_1$-singularity and (iii) follows.

Let $t=\frac{1}{3}$. The maximal sets $N^\oplus_t(\lambda, x_i)$ are the same as for $t\in\left(\frac{1}{5}, \frac{1}{3}\right)$ with the addition of $N^\oplus_t(\lambda_5, x_0)$, which represents the monomials of the equations of any pair $(S',D')$ such that $D'$ has (a degeneration of) an $\boldsymbol{A}_3$ singularity at a singular point $P$ of $S$. Hence $(S,D)$ is $\frac{1}{3}$-stable if and only if it is $t$-stable for $t\in\left(\frac{1}{5}, \frac{1}{3}\right)$ but $D$ does not have (a degeneration of) an $\boldsymbol{A}_3$ singularity at a singular point of $P$. Hence (iv) follows.

Let $t\in \left(\frac{1}{3},\frac{3}{7}\right)$. The maximal sets are $N^\oplus_t(\lambda, x_i)$ the same as for $t=\frac{1}{3}$ but replacing the set $N^\oplus_t(\lambda_5, x_3)$ --- parametrizing pairs $(S',D')$ where $S'$ has (a  degeneration of) an $\boldsymbol{A}_4$ singularity --- with the set $N^\oplus_t(\lambda_6,x_3)$ --- parametrizing pairs $(S',D')$ where $S'$ has (a degeneration of) an $\boldsymbol{A}_5$ singularity. Hence a $t$-stable pair $(S,D)$ may now have $\boldsymbol{A}_4$ singularities but not $\boldsymbol{A}_5$ singularities. However, the coefficients of the equations of $(S,D)$ cannot be in $N^\oplus_t(\lambda_9, x_3)$ and hence $S$ cannot have (degenerations of) $\boldsymbol{D}_4$ singularities.  Furthermore the restrictions for $t=\frac{1}{3}$ regarding $D$ still apply. Therefore a pair $(S,D)$ is $t$-stable if and only if satisfies the conditions in (v).

Let $t=\frac{3}{7}$. The maximal sets $N^\oplus_t(\lambda, x_i)$ are the same as for $t\in\left(\frac{1}{3}, \frac{3}{7}\right)$ but replacing the set $N^\oplus_t(\lambda_5, x_0)$ --- parametrizing pairs $(S',D')$ such that $D'$ has (a degeneration of) an $\boldsymbol{A}_3$ singularity at a surface singularity of $S'$ ---,  for both the set $N^\oplus_t(\ol_6, x_0)$ --- parametrizing pairs $(S',D')$ such that $D'$ has (a degeneration of) an $\boldsymbol{A}_2$ singularity at a surface singularity of $S'$ --- and the set $N^\oplus_t(\ol_9, x_0)$ --- parametrizing pairs $(S',D')$ such that $D'$ has (a degeneration of) an $\boldsymbol{A}_4$ singularity. Hence (vi) follows.

Let $t\in \left(\frac{3}{7}, \frac{5}{9}\right]$. The difference between the maximal sets for $N^\oplus_t(\lambda, x_i)$ and for $N^\oplus_{\frac{3}{7}}(\lambda, x_i)$ consists of three new sets ($N^\oplus_t(\ol_6, x_3)$, $N^\oplus_t(\lambda_8, x_3)$ and $N^\oplus_t(\lambda_{10}, x_3)$) and three sets that do not appear for $t$ anymore ($N^\oplus_t(\lambda_9, x_3)$, $N^\oplus_t(\lambda_6, x_3)$, $N^\oplus_t(\lambda_7, x_3)$). The three new sets parametrize pairs $(S',D')$ such that $S'$ has at least either (a degeneration of) one $\boldsymbol{D}_5$ singularity , a degeneration of one $\tilde{{\boldsymbol{E}}_6}$ singularity or one line of singularities, respectively. The three sets that are not maximal non-stable sets for $t$ parametrize pairs $(S',D')$ such that $S'$ has (a degeneration of) a $\boldsymbol{D}_4$ singularity, an $\boldsymbol{A}_5$ singularity and a line of singularities, respectively. Hence, the only difference with respect to $t=\frac{3}{7}$ is that we include pairs $(S,D)$ such that $S$ has at worst $\boldsymbol{A}_5$ or $\boldsymbol{D}_4$ singularities and (vii) follows.

Let $t=\frac{5}{9}$. The difference between the maximal sets for $N^\oplus_t(\lambda, x_i)$ for $t\in \left(\frac{3}{7},\frac{5}{9}\right)$ and for $N^\oplus_{\frac{5}{9}}(\lambda, x_i)$ consists of replacing the set $N^\oplus_t(\lambda_3, x_0)$ --- parametrizing pairs $(S',D')$ such that $D'$ is non-reduced --- for the set\linebreak $N^\oplus_t(\lambda_6, x_0)$ --- parametrizing pairs $(S',D')$ such that $D'$ has (a degeneration of) an $\boldsymbol{A}_3$ singularity. Hence a $\frac{5}{9}$-stable pair $(S,D)$ is a $t$-stable pair for $t\in \left(\frac{3}{7},\frac{5}{9}\right)$ such that $D$ has at worst an $\boldsymbol{A}_2$ singularity. Notice that $D$ is still reduced by Lemma \ref{lemma:realbad}. Hence (viii) follows.

Let $t\in\left(\frac{5}{9}, \frac{9}{13}\right)$. The difference between the maximal sets for $N^\oplus_t(\lambda, x_i)$ for $t\in \left(\frac{5}{9},\frac{9}{13}\right)$ and for $N^\oplus_{\frac{5}{9}}(\lambda, x_i)$ consists of replacing the set $N^\oplus_t(\ol_6, x_3)$ --- parametrizing pairs $(S',D')$ such that $S'$ has (a degeneration of) a $\boldsymbol{D}_5$ singularity --- for the set $N^\oplus_t(\ol_4, x_3)$ --- parametrizing pairs $(S',D')$ such that $S'$ has (a degeneration of) an ${\boldsymbol{E}}_6$ singularity. Hence (ix) follows.

Let $t=\frac{9}{13}$. The difference between the maximal sets for $N^\oplus_t(\lambda, x_i)$ for $t\in \left(\frac{5}{9}, \frac{9}{13}\right)$ and for $N^\oplus_{\frac{9}{13}}(\lambda, x_i)$ consists of replacing the set $N^\oplus_t(\ol_6, x_0)$ --- parametrizing pairs $(S',D')$ such that $D'$ has (a degeneration of) an $\boldsymbol{A}_2$ singularity at a singular point of $S'$ ---, the set $N^\oplus_t(\ol_9, x_0)$ --- parametrizing pairs $(S',D')$ such that $D'$ has (a degeneration of) a $\boldsymbol{D}_4$ singularity --- and the set $N^\oplus_t(\lambda_6, x_0)$ --- parametrizing pairs $(S',D')$ such that $D'$ has (a degeneration of) an $\boldsymbol{A}_3$ singularity --- for the set $N^\oplus_t(\lambda_4, x_0)$ --- parametrizing pairs $(S',D')$ such that $D'$ has (a degeneration of) an $\boldsymbol{A}_2$ singularity. Hence (x) follows.

Let $t\in \left(\frac{9}{13}, 1\right)$. The maximal sets $N^\oplus_t(\lambda, x_i)$ are the same as for $N^\oplus_{\frac{9}{13}}(\lambda, x_i)$ but removing the set $N^\oplus_t(\ol_4, x_3)$, which parametrizes pairs $(S',D')$ where $S'$ has an ${\boldsymbol{E}}_6$ singularities. Hence such surfaces are now $t$-stable providing they do not violate any other conditions. This concludes the proof of the theorem.
\end{proof}

\begin{proof}[{Theorem \ref{theorem:GITb}}]
Suppose $(S,D)$ --- defined by polynomials $F$ and $H$ --- belongs to a closed strictly $t$-semistable orbit. By Lemma \ref{lemma:Gm-action}, they are generated by monomials in $N^0_t(\lambda, x_i)$ for some $(\lambda, x_i)$ such that $N^\oplus_t(x_i,\lambda)$ is maximal with respect to the containment of order of sets. Since there is a finite number of $\lambda$ to consider (those in Lemma \ref{lemma:critical1PS}), this is a finite computation which can be carried out by software \cite{VGITour, gallardo-jmg-code}. 
For each pair $(\lambda, x_i)$, there is a change of coordinates that gives a natural bijection between $N^0(\lambda, x_i)$ and $N^0(\overline \lambda, x_{3-i})$. Therefore about half of the values are redundant and we have two possible choices for each $F$ and $H$ if $t\neq t_1,\ldots, t_5$ three choices if $t=t_1, t_2, t_4,t_5$ and four if $t=t_3$.

Similarly, by  \cite[Lemma 3.2]{VGITour} and Lemma \ref{lemma:critical1PS} we can check that the pair $(\overline S, \overline D)$ corresponding to $\overline F=x_0x_3x_1+x_2^3$, $\overline H=x_2$ is strictly $t$-semistable. Suppose that $(\lambda, x_i)=(\lambda_1, x_2)$. Then $F=x_0x_3f_1(x_1,x_2)+f_3(x_1,x_2)$ and $H=g_1(x_1,x_2)$. After a change of variables involving only $x_1$ and $x_2$, we may assume that $F=x_0x_3x_1+f_3(x_1,x_2)$. We will show that the closure of $(S,D)$ contains $(\overline S, \overline D)$. Let $\gamma=\mathrm{Diag}(1, 1, 0, -2)$ be a one-parameter subgroup. Then
$$\lim_{t\rightarrow 0}\gamma(t)\cdot F=x_0x_1x_3+bx_2^3 \text{ and }\lim_{t\rightarrow 0}\gamma(t)\cdot H=x_2.$$
If $b=0$, then $\lim_{t\rightarrow 0}\gamma(t)\cdot S$ is reducible, which is impossible as it is not $t$-stable for any value of $t\in (0,1)$ by Lemma \ref{lemma:realbad}. Therefore $b\neq 0$ and by rescaling we see that $\lim_{t\rightarrow 0}\gamma(t)\cdot (S,D)=(\overline S,\overline D)$. Hence, the closure of the orbit of $(S,D)$ contains $(\overline S, \overline D)$, which we tackle next.

Suppose that $(\lambda, x_i)=(\lambda_2, x_1)$. Then $F=x_1^3+x_0f_2(x_2,x_3)$ and $H=x_1$. After a change of variables involving only $x_2$ and $x_3$ we may assume that $F=x_1^3+x_0x_2x_3$. We can do similar changes of variables in the rest of the cases and end up with $F$ and $H$ not depending on any parameters. Observe that since $(S,D)$ is strictly $t$-semistable, the stabilizer subgroup of $(S,D)$, namely $G_{(S,D)}\subset \SL(4,\mathbb C)$ is infinite (see \cite[Remark 8.1 (5)]{dolgachev}). In particular there is a $\mathbb C^*$-action on $(S,D)$.
Lemma \ref{lemma:Gm-action} classifies the singularities of $(S,D)$ uniquely according to their equations. For each $t\in (0,1)$, the proof of Theorem \ref{theorem:GITb} follows once we recall the classification of plane cubic curves according to their isolated singularities (see Table \ref{tab:cubic-curves}).
\end{proof}

\bibliographystyle{amsalpha}
\bibliography{quintic} 	

\newcommand{\etalchar}[1]{$^{#1}$}
\providecommand{\bysame}{\leavevmode\hbox to3em{\hrulefill}\thinspace}
\providecommand{\MR}{\relax\ifhmode\unskip\space\fi MR }
\providecommand{\MRhref}[2]{%
  \href{http://www.ams.org/mathscinet-getitem?mr=#1}{#2}
}
\providecommand{\href}[2]{#2}
\begin{thebibliography}{GMGS18}

\bibitem[ACT02]{allcock2002complex}
Daniel Allcock, James~A Carlson, and Domingo Toledo, \emph{The complex
  hyperbolic geometry of the moduli space of cubic surfaces}, J. Algebraic
  Geom. \textbf{11} (2002), no.~4, 659--724.

\bibitem[Arn75]{ArnoldLong}
V.~I. Arnold, \emph{Critical points of smooth functions, and their normal
  forms}, Uspehi Mat. Nauk \textbf{30} (1975), no.~5(185), 3--65. \MR{0420689}

\bibitem[Arn76]{arnoldinvetiones}
V.I. Arnold, \emph{Local normal forms of functions}, Inventiones Mathematicae
  \textbf{35} (1976), no.~1, 87--109.

\bibitem[BW79]{classificationcubics}
J.W Bruce and C.~Wall, \emph{On the classification of cubic surfaces}, J.
  London Math. Soc. (2) \textbf{2} (1979), no.~2, 245--256.

\bibitem[DH98]{dolgachev1998variation}
Igor~V. Dolgachev and Yi~Hu, \emph{Variation of geometric invariant theory
  quotients}, Inst. Hautes \'{E}tudes Sci. Publ. Math. (1998), no.~87, 5--56,
  With an appendix by Nicolas Ressayre. \MR{1659282}

\bibitem[Dol03]{dolgachev}
Igor Dolgachev, \emph{Lectures on invariant theory}, London Mathematical
  Society Lecture Note Series, vol. 296, Cambridge University Press, Cambridge,
  2003. \MR{2004511}

\bibitem[DPW00]{du2000hypersurfaces}
AA~Du~Plessis and CTC Wall, \emph{Hypersurfaces in {$\mathbb P^n(\mathbb C)$}
  with one-parameter symmetry groups}, R. Soc. Lond. Proc. Ser. A Math. Phys.
  Eng. Sci. \textbf{456} (2000), no.~2002, 2515--2541. \MR{1796494}

\bibitem[DT92]{Ding-Tian-YTDconjecture}
Wei~Yue Ding and Gang Tian, \emph{K\"ahler-{E}instein metrics and the
  generalized {F}utaki invariant}, Invent. Math. \textbf{110} (1992), no.~2,
  315--335. \MR{1185586 (93m:53039)}

\bibitem[GMG17]{gallardo-jmg-code}
P.~Gallardo and J.~Martinez-Garcia, \emph{Variations of {GIT} quotients package
  v0.6.13.}, {https://doi.org/10.15125/BATH-00458}, 2017.

\bibitem[GMG18]{VGITour}
\bysame, \emph{Variations of geometric invariant quotients for pairs, a
  computational approach}, Proc. Amer. Math. Soc. \textbf{146} (2018), no.~6,
  2395--2408. \MR{3778143}

\bibitem[GMGS18]{gallardo-margar-spotti}
Patricio Gallardo, Jesus Martinez-Garcia, and Cristiano Spotti,
  \emph{Applications of the moduli continuity method to log {K}-stable pairs},
  arXiv preprint arXiv:1811.00088 (2018).

\bibitem[Hil93]{hilbert}
David Hilbert, \emph{{\"U}ber die vollen invariantensysteme}, Math. Ann.
  \textbf{42} (1893), no.~3, 313--373.

\bibitem[HKT09]{hacking2009stable}
Paul Hacking, Sean Keel, and Jenia Tevelev, \emph{Stable pair, tropical, and
  log canonical compactifications of moduli spaces of del {P}ezzo surfaces},
  Invent. Math. \textbf{178} (2009), no.~1, 173--227. \MR{2534095}

\bibitem[HP10]{Hacking-Prokhorov}
Paul Hacking and Yuri Prokhorov, \emph{Smoothable del {P}ezzo surfaces with
  quotient singularities}, Compos. Math. \textbf{146} (2010), no.~1, 169--192.
  \MR{2581246 (2011f:14062)}

\bibitem[I{\etalchar{+}}82]{ishii1982moduli}
Shihoko Ishii et~al., \emph{Moduli space of polarized del {P}ezzo surfaces and
  its compactification}, Tokyo J. Math \textbf{5} (1982), no.~2, 289--297.

\bibitem[Laz09]{laza2009deformations}
Radu Laza, \emph{Deformations of singularities and variation of {GIT}
  quotients}, Trans. Amer. Math. Soc. \textbf{361} (2009), no.~4, 2109--2161.
  \MR{2465831}

\bibitem[Loo77]{Looijenga-Root-Systems}
Eduard Looijenga, \emph{Root systems and elliptic curves}, Invent. Math.
  \textbf{38} (1976/77), no.~1, 17--32. \MR{0466134}

\bibitem[LPZ19]{Laza-Pearlstein-Zheng}
Radu Laza, Gregory Pearlstein, and Zheng Zhang, \emph{On the moduli space of
  pairs consisting of a cubic threefold and a hyperplane}, to appear in Adv.
  Math. (arXiv:1710.08056).

\bibitem[Muk03]{mukai}
Shigeru Mukai, \emph{An introduction to invariants and moduli}, Cambridge
  Studies in Advanced Mathematics, vol.~81, Cambridge University Press,
  Cambridge, 2003. \MR{2004218}

\bibitem[Nar82]{naruki1982cross}
Isao Naruki, \emph{Cross ratio variety as a moduli space of cubic surfaces},
  Proceedings of the London Mathematical Society \textbf{3} (1982), no.~1,
  1--30.

\bibitem[OSS16]{Odaka-Spotti-Sun}
Yuji Odaka, Cristiano Spotti, and Song Sun, \emph{Compact moduli spaces of del
  {P}ezzo surfaces and {K}\"ahler-{E}instein metrics}, J. Differential Geom.
  \textbf{102} (2016), no.~1, 127--172. \MR{3447088}

\bibitem[Pin78]{pinkham1978deformations}
Henry Pinkham, \emph{Deformations of normal surface singularities with
  {$\mathbb C^*$} action}, Math. Ann. \textbf{232} (1978), no.~1, 65--84.

\bibitem[ST99]{shustin1999versal}
Eugenii Shustin and Ilya Tyomkin, \emph{Versal deformation of algebraic
  hypersurfaces with isolated singularities}, Math. Ann. \textbf{313} (1999),
  no.~2, 297--314.

\bibitem[Tha96]{thaddeus1996geometric}
Michael Thaddeus, \emph{Geometric invariant theory and flips}, J. Amer. Math.
  Soc. \textbf{9} (1996), no.~3, 691--723. \MR{1333296}

\end{thebibliography}
	
\end{document}